\pgfplotsset{compat=newest}
\theoremstyle{plain}
\newtheorem{theorem}{Theorem}[section]
\newtheorem{lemma}[theorem]{Lemma}
\newtheorem{corollary}[theorem]{Corollary}
\newtheorem{definition}[theorem]{Definition}
\newtheorem{remark}[theorem]{Remark}
\def\Letters{A,B,C,D,E,F,G,H,I,J,K,L,M,N,O,P,Q,R,S,T,U,V,W,X,Y,Z}
\Letters \do{%
  \expandafter\edef\csname\@l bb\endcsname{%
  \noexpand\ensuremath{\noexpand\mathbb{\@l}}}%
  \expandafter\edef\csname\@l bf\endcsname{{\noexpand\bf \@l}}%
  \expandafter\edef\csname\@l cal\endcsname{%
  \noexpand\ensuremath{\noexpand\mathcal{\@l}}}%
  \expandafter\edef\csname\@l eu\endcsname{%
  \noexpand\ensuremath{\noexpand\EuScript{\@l}}}%
  \expandafter\edef\csname\@l frak\endcsname{%
  \noexpand\ensuremath{\noexpand\mathfrak{\@l}}}%
  \expandafter\edef\csname\@l rm\endcsname{{\noexpand\rm \@l}}%
  \expandafter\edef\csname\@l scr\endcsname{%
  \noexpand\ensuremath{\noexpand\mathscr{\@l}}}%
}
\newcommand{\bs}[1]{{\boldsymbol#1}}
\renewcommand{\d}{\operatorname{d\!}}
\newcommand{\isdef}{\mathrel{\mathrel{\mathop:}=}}
\newcommand{\dist}{\operatorname{dist}}
\newcommand{\spn}{{\operatorname{span}}}
\DeclareMathOperator*{\argmin}{\operatorname{argmin}}
\DeclareMathOperator{\kernel}{\Kcal}
\definecolor{navy}{RGB}{102,153,255}
\definecolor{tuerkis}{RGB}{51,153,204}
\algrenewcommand\alglinenumber[1]{\ding{\numexpr191 + #1}}
\title[The dimension weighted fast multipole method]
{The dimension weighted fast multipole method for scattered data approximation}
\author{Helmut Harbrecht, Michael Multerer, and Jacopo Quizi}
\email{}
\address{Helmut Harbrecht,
Departement f\"ur Mathematik und Informatik, 
Universit\"at Basel, 
Spiegelgasse 1, 4051 Basel, Schweiz.}
\email{helmut.harbrecht@unibas.ch}
\address{
Michael Multerer and Jacopo Quizi,
Istituto Eulero,
Universit{\`a} della Svizzera italiana,
Via la Santa 1, 6962 Lugano, Svizzera.}
\email{\{michael.multerer,jacopo.quizi\}@usi.ch}
\begin{document}
\begin{abstract}
The present article is concerned scattered data approximation
for higher di\-men\-sio\-nal data sets which exhibit an anisotropic
behavior in the different dimensions.
Tailoring sparse polynomial interpolation to this specific situation, we derive
very efficient degenerate kernel approximations which we then use in 
a dimension weighted fast multipole method. This dimension weighted
fast multipole method enables to deal with many more 
dimensions than the standard black-box fast multipole method based on
interpolation. A thorough analysis of the method is provided including rigorous
error estimates.
The accuracy and the cost of the approach are validated 
by extensive numerical results. As a relevant application, we apply
the approach to a shape uncertainty quantification problem.
\end{abstract}
\keywords{Scattered data interpolation, RKHS, dimension weights,
fast multipole method}
\subjclass[2000]{41A05, 41A25, 41A58, 65D05}

\maketitle
\section{Introduction}\label{sec:introduction}
One of the main challenges in the computation of scattered data approximants
in reproducing kernel Hilbert spaces is the assembly and the solution of 
the associated linear system of equations for globally supported kernels.
In this case, the kernel matrix is densely populated resulting in 
a quadratic assembly- and a cubic solution cost with respect to the number of
data sites. This obstruction can be overcome by using suitable acceleration 
techniques, several of which have been developed in recent years.
These comprise approaches that are based on wavelet matrix compressions,
see, e.g., \cite{GBD98,Samplets}, approaches based on the fast multipole 
method in, see, e.g., \cite{GR87,MXTCB15,Tausch,YBZ04}, and approaches 
based on hierarchical matrices, see, e.g., 
\cite{AFGHO16,BG07,DHS17,FKS18,H15,LSGK19,PZ}.

All aforementioned methods have in common that they reduce the cost for the 
solution of the scattered data approximation problem to an essentially linear
one. However, each of these methods is subject to the 
\emph{curse of dimensionality} when the data sites become higher dimensional.
Nonetheless, there are many situations where the data sites are high-dimensional
but equipped with dimension weights, which means that particular dimensions are 
less important than others. Relevant examples arise from uncertainty 
quantification in physical models. Having sufficiently fast decaying 
dimension weights, numerical interpolation and quadrature
schemes can be designed which converge at rates that are not or only mildly 
subject to the curse of dimensionality, see, e.g., 
\cite{C2S15,CD15,DGLS17,HHPS18} and the references therein. 

In this article, we address the construction of a black-box fast multipole 
method tailored to higher dimensions in the presence of dimension weights. 
This approach differs from existing (data-driven) ones, see, e.g.\ 
\cite{CHCX,ASKIT,YLRB17}, in that we consider 
degenerate kernel approximant based on anisotropic polynomial interpolation 
in the kernel's farfield. This approach allows for rigorous a-priori
compression error estimates. Our estimates exploit the anisotropy induced
by the dimension weights under the assumption that the kernel's farfield 
is analytically extendable into a suitable tensor product domain. 
This \emph{weighted total degree polynomial interpolation} can be seen as 
an extension
of the sparse Smolyak interpolation on multidimensional hypercubes, 
see \cite{SMO63}. By working along the lines of what has been done in 
\cite{HHPS18} within the context of quadrature problems, we
derive error relative estimates that are independent of 
the number of data sites and decrease exponentially with 
respect to the chosen maximum polynomial degree for the interpolation. 
Having fixed a polynomial space for interpolation, we equip it with
an anisotropic Legendre basis and employ
approximate Fekete points, see \cite{BDME16, BDMSV11, BDMSV10}, 
subsampled from the Halton sequence, see \cite{Hal60}, 
for the interpolation of the farfield.

Our key contributions are the proof of the existence of 
an analytic extension of asymptotically smooth kernel functions into
anisotropic tensor product domains in case of dimension weights and the
proof of an error estimate for polynomial approximation is derived under the 
condition that the inverses of the radii of the analytic extension form an 
\(\ell^{1}\)-sequence. In particular, the generic constant of this error 
estimate is independent of the spatial dimension. 
Moreover, we propose a numerical realization of this approach based on
the interpolation with an anisotropic Legendre basis in combination with
approximate Fekete points.
Our results particularly show that the cost of the fast multipole
method can enormously be reduced, while the errors stays
controllable by the rigorous error estimates provided. In our numerical 
tests, we study the cost and the and accuracy of the dimension 
weighted fast multipole method for different weight sequences. 
Moreover, we demonstrate the feasibility of the present approach in 
practice by its application to a shape uncertainty quantification
problem.

The remainder of this article is arranged as follows: Section~\ref{sec:problem} 
recalls the concept of reproducing kernel Hilbert spaces and introduces 
the scattered data approximation problem within the corresponding context. 
Section~\ref{sec:anisoKernel} is dedicated to the properties 
of kernels on anisotropic data sets. In particular, we show
how such kernels can be extended analytically under the asymptotical 
smoothness assumption. In Section~\ref{sec:approximation}, we 
provide error estimates for the polynomial 
approximation of an analytic function. More specifically, 
through the introduction of polynomial projection operators and the 
corresponding error analysis, we provide an error estimate 
for a generic analytical function that can be analytically 
extended and we derive convergence of the dimension weighted
total degree interpolation. Section~\ref{sec:multipole}
is concerned with applying what has been proven in the 
preceding sections to develop the dimension weighted fast 
multipole method in the context of  
Section~\ref{sec:anisoKernel}. Section~\ref{sec:numerix} 
is dedicated to numerical results. Finally, in Section
\ref{sec:conclusio}, we state concluding remarks.

Throughout this article, to avoid the repeated use of
generic but unspecified constants, by \(C\lesssim D\)
we indicate that $C$ can be bounded by a multiple of $D$,
independently of parameters which $C$ and $D$ may depend on.
Moreover, \(C\gtrsim D\) is defined as \(D\lesssim C\)
and \(C\sim D\) as \(C\lesssim D\) and \(D\lesssim C\).

\section{Problem statement}\label{sec:problem}
We start by recalling the concept of \emph{reproducing kernel Hilbert spaces}. 
The interested reader finds a comprehensive representation of the underlying 
mathematical theory in \cite{Fasshauer2007,Wendland2004} for example.

\begin{definition}\label{def:RKHS}
Let \(\Omega\subset\Rbb^d\) and 
$\big(\mathcal{H},(\cdot,\cdot)_{\mathcal{H}}\big)$ be a Hilbert space of 
functions \(h\colon\Omega\to\Rbb\). A \emph{reproducing kernel} $\kernel$ for 
\(\Hcal\) is a function $\kernel:\Omega\times\Omega\to\Rbb$ such that
\begin{enumerate}
  \item $\kernel(\cdot,{\bs x})\in\mathcal{H}$ for all 
  ${\bs x}\in\Omega$,
  \item $h({\bs x}) = \big(h, \kernel (\cdot,{\bs x})\big)_\mathcal{H}$ 
  for all $h\in\mathcal{H}$ and all ${\bs x}\in\Omega$.
\end{enumerate}
If \(\Hcal\) exhibits a reproducing kernel, we call it a
\emph{reproducing kernel Hilbert space (RKHS)}. 
\end{definition}

The kernel of an RKHS is 
known to be symmetric and positive semidefinite
in the following sense.

\begin{definition}\label{def:poskernel}
A kernel
$\kernel\colon\Omega\times\Omega\rightarrow\Rbb$ is called 
\emph{positive (semi-)definite} on $\Omega\subset\Rbb^d$,
iff \([\kernel({\bs x}_i,{\bs x}_j)]_{i,j=1}^N\)
is a symmetric and positive (semi-)definite matrix
for all $\{{\bs x}_1, \ldots,{\bs x}_N\}\subset\Omega$
and all $N\in\mathbb{N}$.
\end{definition}

Given a set of \emph{data sites}
\(X=\{{\bs x}_1,\ldots,{\bs x}_N\}\subset\Rbb^d\)
and \emph{data values} \(y_1,\ldots,y_N\in\Rbb\),
the present article is concerned with the efficient computation of
the kernel interpolant 
\[
  s_X({\bs x})\isdef \sum_{j=1}^N \alpha_j\kernel({\bs x}_j,{\bs x})
\]
such that
\[
s_X({\bs x}_i)=y_i\quad\text{for }i=1,\ldots,N.
\]
This interpolant 
is given by solving the linear system of equations
\begin{equation}\label{eq:LGS0}
  {\bs K}{\bs \alpha} = {\bs y},
\end{equation}
where
\[
  {\bs K}\isdef[\kernel({\bs x}_i,{\bs x}_j)]_{i,j=1}^N\in\Rbb^{N\times N},
  \quad {\bs y}\isdef[y_i]_{i=1}^N\in\Rbb^{N},
  \quad {\bs \alpha}\isdef[\alpha_i]_{i=1}^N\in\Rbb^{N}.
\]
Closely related to this interpolation problem is the
\emph{kernel ridge regression}
\[
\min_{\alpha_1,\ldots,\alpha_N}\sum_{i=1^n}\big(y_i-s_X({\bs x}_i)\big)^2
+\lambda\|s_X\|_{\Hcal}^2,\quad\lambda>0,
\]
which regularizes the solution to the interpolation problem by
penalizing the norm of the interpolant. The first order condition
of this minimization problem yields the linear system
\begin{equation}\label{eq:LGS}
({\bs K}+\lambda{\bs I}){\bs \alpha} = {\bs y},
\end{equation}
which coincides with \eqref{eq:LGS0} for \(\lambda=0\).

As the \emph{kernel matrix} ${\bs K}$ is in general densely
populated, we shall apply the fast multipole method to
accelerate matrix-vector multiplications in an iterative solver
for the solution of the linear system~\eqref{eq:LGS}. 
The particular multipole method we consider 
exploits kernel expansions obtained by interpolation. That way, 
the approach is black box and applies also to non-stationary 
kernels, see \cite{Borm,Gie01} for example. In order to
deal with possibly high dimension, we use interpolation by 
(anisotropic) total degree polynomials.

\begin{figure}[htb]
\begin{center}
\begin{tikzpicture}[x=0.5cm,y=0.5cm]
  \shade[xslant=1,bottom color=gray!10, top color=gray!5]
    (-4,4) rectangle (4,5);
      \shade[top color=gray!10, bottom color=black!20]
    (0,0) rectangle (8,4);
      \shade[yslant=1,top color=gray!5, bottom color=black!20]
    (8,-8) rectangle (9.01,-4);
\draw[step=1](0,0) grid (8,4);
\draw[xslant=1,xstep=1](-4,4) grid (4,5);
\draw[xslant=1](-4,4.53)--(4,4.53);
\draw[xslant=1](-4,5)--(4,5);
\draw[yslant=1,ystep=1](8,-8) grid (9.01,-4);
\draw[yslant=1](9.01,-8)--(9.01,-4);
\draw[yslant=1](8.53,-8)--(8.53,-4);
\draw(4,-0.5)node{$b_1$};
\draw(-0.5,2)node{$b_2$};
\draw(-0.1,4.65)node{$b_3$};
\end{tikzpicture}
\caption{\label{fig:anisoBox}Considering quasi-uniform
data sites in an anisotropic bounding box yields isotropic clusters.
As a consequence, the nearfield becomes feasible.}
\end{center}
\end{figure}
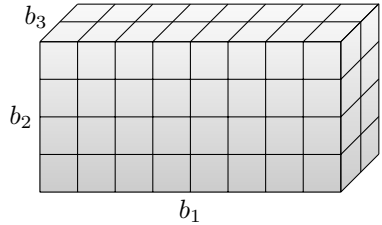

Throughout this article, we assume that the set \(X\) is \emph{quasi-uniform}
and contained in an anisotropic axis parallel cuboid 
\begin{equation}\label{eq:anisotropicBox}\Bcal=[0,b_1]
\times\cdots\times[0,b_d],
\end{equation} i.e., \(X\subset\Bcal\) with \emph{dimension weights} 
\(b_{1}\geq\ldots\geq b_d\geq 0\), see Figure~\ref{fig:anisoBox} for a 
visualization. 

\begin{definition}\label{def:qunifpoints}
The data set \(X=\{{\bs x}_1,\ldots,{\bs x}_N\}\subset\Omega\) 
is quasi-uniform if the \emph{fill distance} 
\[h_{X,\Omega}\isdef\sup\limits_{\bs{x}\in \Omega}
\min\limits_{\bs{x}_i\in X}\|\bs{x}-\bs{x}_i\|_2
\] 
is proportional to the separation radius 
\[
q_{X}\isdef\min\limits_{i\neq j}\|\bs{x}_i-\bs{x}_j\|_2,
\] 
i.e., there exists a constant \(c_{X,\Omega}\in (0,1)\) such that
\[
0<c_{X,\Omega}\leq\frac{q_X}{h_{X,\Omega}}\leq\frac{1}{c_{X,\Omega}}.
\]
\end{definition}

The condition \eqref{eq:anisotropicBox}is, for example,
satisfied if we are given quasi-uniform points in $X\subset\Rbb^d$ which 
exhibit an anisotropic structure in the sense of rapidly decaying 
singular values of the respective data matrix ${\bf X}
\isdef[{\bs x}_1,\ldots,{\bs x}_N]\in\Rbb^{d\times N}$. Then,
employing a principal component analysis and a suitable affine
transform, the data becomes uniformly distributed in the set's
bounding box given by the anisotropic 
cuboid \(\Bcal\). Another important example are samples of partial
differential equations with random inputs, where the latter is
represented by a Karhunen-Lo{\`e}ve expansion. In this case, the
weights \(b_i\) correspond to the singular values up to the 
\(L^\infty\)-normalization of the spatial eigenfunctions.

In what follows, we study how the anisotropy, imposed by the 
dimension weights $\{b_i\}_{i=1}^d$, affects the 
approximation properties of the
fast multipole method. In the first place, we observe
that a cardinality based clustering results in isotropic 
clusters as the points are uniformly distributed in $\mathcal{B}$.
Hence, as an immediate consequence, the nearfield becomes feasible,
compare Figure~\ref{fig:anisoBox}. But also for the kernel
approximation in the farfield we can use different polynomial 
degrees for each dimension to account for the different 
dimension weights. The respective analysis is carried 
out in the following sections.

\section{Kernels on anisotropic data sets}\label{sec:anisoKernel}
For our analysis, we introduce the 
linear transformation 
\[{\bs B}\colon[0,1]^d\to\Bcal,\quad
\hat{\bs x}\mapsto{\bs x}={\bs B}\hat{\bs x},
\]
which is given by the matrix
\[
{\bs B}\isdef\operatorname{diag}(b_1,\ldots,b_d).
\]
Thus, considering the kernel as a bivariate function
\[
\kernel\colon\Bcal\times\Bcal\to\Rbb
\]
gives rise to the \emph{transported kernel} 
\begin{equation}\label{eq:transKernel}
\kernel_{\bs B}\colon[0,1]^d\times[0,1]^d\to\Rbb,\quad
\kernel_{\bs B}(\hat{\bs x},\hat{\bs y})
\isdef\kernel({\bs B}\hat{\bs x},{\bs B}\hat{\bs y}).
\end{equation}

The subsequent analysis of 
the kernel interpolation is based on
the \emph{asymptotical smoothness}
property of the kernel \(\kernel\), that is
\begin{equation}\label{eq:kernel_estimate}
  \bigg|\frac{\partial^{|\bs\alpha|+|\bs\beta|}}
  	{\partial{\bs x}^{\bs\alpha}
  	\partial{\bs y}^{\bs\beta}} \kernel({\bs x},{\bs y})\bigg|
  		\le c_{\kernel} \frac{(|\bs\alpha|+|\bs\beta|)!}
		{\rho^{|\bs\alpha|+|\bs\beta|}
		\|{\bs x}-{\bs y}\|_2^{|\bs\alpha|+|\bs\beta|}},\quad 
		c_{\kernel},\rho>0.
\end{equation}

For the derivatives of the transported kernel function~\eqref{eq:transKernel}, 
we obtain the following anisotropic 
version of the asymptotic smoothness property \eqref{eq:kernel_estimate}.

\begin{lemma}\label{lem:transKernelDer} 
The derivatives of the transported kernel satisfy the bound
\begin{equation}\label{eq:trans_kernel_estimate}
  \bigg|\frac{\partial^{|\bs\alpha|+|\bs\beta|}}
  	{\partial\hat{\bs x}^{\bs\alpha}
  	\partial\hat{\bs y}^{\bs\beta}}\kernel_{\bs B}
	(\hat{\bs x},\hat{\bs y})\bigg|
  		\le c_{\kernel}\bigg(
		\frac{\bs b}{\rho}\bigg)^{{\bs\alpha}+{\bs\beta}}
		\frac{(|\bs\alpha|+|\bs\beta|)!}
		{
		\|{\bs B}(\hat{\bs x}-\hat{\bs y})\|_2^{|\bs\alpha|+|\bs\beta|}},
		\quad 
		c_{\kernel},\rho>0
\end{equation}
with \({\bs b}\isdef[b_1\ldots,b_d]\).
\end{lemma}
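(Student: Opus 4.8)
The plan is to reduce the claim to the given asymptotic smoothness estimate \eqref{eq:kernel_estimate} by the chain rule, exploiting that the transformation \({\bs B}=\operatorname{diag}(b_1,\ldots,b_n)\) is diagonal. Writing \({\bs x}={\bs B}\hat{\bs x}\) componentwise as \(x_i=b_i\hat{x}_i\), a single differentiation of \(\kernel_{\bs B}(\hat{\bs x},\hat{\bs y})=\kernel({\bs B}\hat{\bs x},{\bs B}\hat{\bs y})\) with respect to \(\hat{x}_i\) produces the constant factor \(b_i\) together with \(\partial_{x_i}\kernel\) evaluated at the transformed point, and analogously for the \(\hat{\bs y}\) variables. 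Iterating this over all entries of the multi-indices \({\bs\alpha}\) and \({\bs\beta}\), I would first establish the exact identity
\[
\frac{\partial^{|{\bs\alpha}|+|{\bs\beta}|}}{\partial\hat{\bs x}^{\bs\alpha}\partial\hat{\bs y}^{\bs\beta}}\kernel_{\bs B}(\hat{\bs x},\hat{\bs y})={\bs b}^{{\bs\alpha}+{\bs\beta}}\bigg(\frac{\partial^{|{\bs\alpha}|+|{\bs\beta}|}}{\partial{\bs x}^{\bs\alpha}\partial{\bs y}^{\bs\beta}}\kernel\bigg)({\bs B}\hat{\bs x},{\bs B}\hat{\bs y}),
\]
where \({\bs b}^{{\bs\alpha}+{\bs\beta}}=\prod_{i=1}^n b_i^{\alpha_i+\beta_i}\).

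Second, I would take absolute values and apply \eqref{eq:kernel_estimate} at the point \(({\bs x},{\bs y})=({\bs B}\hat{\bs x},{\bs B}\hat{\bs y})\), bounding the right-hand factor by \(c_{\kernel}(|{\bs\alpha}|+|{\bs\beta}|)!\,\rho^{-(|{\bs\alpha}|+|{\bs\beta}|)}d({\bs B}\hat{\bs x},{\bs B}\hat{\bs y})^{-(|{\bs\alpha}|+|{\bs\beta}|)}\). The key observation here is that the Euclidean distance transforms into the anisotropic one automatically, since \(d({\bs B}\hat{\bs x},{\bs B}\hat{\bs y})=\|{\bs B}\hat{\bs x}-{\bs B}\hat{\bs y}\|_2=\|{\bs B}(\hat{\bs x}-\hat{\bs y})\|_2=d_{\bs B}(\hat{\bs x},\hat{\bs y})\), so the denominator already appears in the desired form.

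Third, it remains to absorb the scalar prefactor. Because \(\rho^{|{\bs\alpha}|+|{\bs\beta}|}=\prod_{i=1}^n\rho^{\alpha_i+\beta_i}\), the combination factorizes entrywise as
\[
\frac{{\bs b}^{{\bs\alpha}+{\bs\beta}}}{\rho^{|{\bs\alpha}|+|{\bs\beta}|}}=\prod_{i=1}^n\bigg(\frac{b_i}{\rho}\bigg)^{\alpha_i+\beta_i}=\bigg(\frac{\bs b}{\rho}\bigg)^{{\bs\alpha}+{\bs\beta}},
\]
which is exactly the vectorial power in \eqref{eq:trans_kernel_estimate}. Combining the three steps yields the asserted bound.

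I do not expect a genuine obstacle, as the argument is essentially bookkeeping with multi-indices. The only points requiring care are verifying that the diagonal chain rule produces precisely the monomial factor \({\bs b}^{{\bs\alpha}+{\bs\beta}}\) rather than mixed cross terms (which is where the diagonality of \({\bs B}\) is used), and recognizing the distance identity \(d({\bs B}\,\cdot\,,{\bs B}\,\cdot\,)=d_{\bs B}\). No regularity issue arises, since \eqref{eq:kernel_estimate} is assumed to hold pointwise for derivatives of every order.
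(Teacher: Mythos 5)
Your proposal is correct and follows essentially the same route as the paper: the chain rule for the diagonal map yields the exact identity with the factor \({\bs b}^{{\bs\alpha}+{\bs\beta}}\), into which the asymptotic smoothness estimate \eqref{eq:kernel_estimate} is inserted at \(({\bs B}\hat{\bs x},{\bs B}\hat{\bs y})\). Your write-up is in fact slightly more detailed than the paper's, since it makes explicit the distance identity \(d({\bs B}\hat{\bs x},{\bs B}\hat{\bs y})=d_{\bs B}(\hat{\bs x},\hat{\bs y})\) and the entrywise factorization of \(\rho^{-(|{\bs\alpha}|+|{\bs\beta}|)}\), both of which the paper leaves implicit.
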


\begin{proof} 
By the chain rule, there holds
\[
  \frac{\partial^{|\bs\alpha|+|\bs\beta|}}
  	{\partial\hat{\bs x}^{\bs\alpha}
  	\partial\hat{\bs y}^{\bs\beta}} 
	\kernel_{\bs B}(\hat{\bs x},\hat{\bs y})
	=
	\frac{\partial^{|\bs\alpha|+|\bs\beta|}}
  	{\partial{\bs x}^{\bs\alpha}
  	\partial{\bs y}^{\bs\beta}} \kernel({\bs B}\hat{\bs x},
	{\bs B}\hat{\bs y}){\bs b}^{{\bs\alpha}+{\bs\beta}},
	\]
	since the mapping \(\hat{\bs x}\mapsto{\bs B}\hat{\bs x}\) is linear.
	From this, the claim is obtained by inserting the
	asymptotical smoothness property \eqref{eq:kernel_estimate}.
\end{proof}

The bound \eqref{eq:trans_kernel_estimate} allows to 
analytically extend each component of the transported 
kernel into an anisotropic region in the complex space 
\(\Cbb^d\). Following the structure of \cite[Lemma 4.76]{Borm}, 
we have the subsequent result.

\begin{lemma}\label{holomorphic_ext_nD}
Let \(f \in C([0,1]^d)\) and \(v_f\in [0,\infty)\)
be such that
\[
|\partial^{\bs\alpha}f(\boldsymbol{x})|
\leq {c_f}\frac{|\boldsymbol{\alpha}|!}{{\bs\rho}^{\bs\alpha}}\quad
\text{holds for all }{\bs\alpha} \in \Nbb^d,\ \boldsymbol{x}\in [0,1]^d.
\]
For \(\tau_k \in [0,\rho_k c_k]\), where \(c_k>0\) 
and \(\gamma\isdef\sum_{k=1}^dc_k<1\), there exists an analytic
extension
\(\tilde{f} \in C^\infty\big(\bs\Sigma({\bs\tau});\Rbb\big)\)
of \(f\) with
\[
\big|\tilde{f}(\bs {z})\big| \leq \frac{c_f}{1-\gamma}
\quad\text{for }{\bs z}\in\bs\Sigma({\bs\tau}),
\]
where we define
\(\bs\Sigma({\bs\tau})\isdef\Sigma(\tau_1)\times\cdots\times
\Sigma(\tau_d)\) with
\(\Sigma(\tau)\isdef\{z \in \Cbb :
\dist(z,[0,1]) \leq \tau \} \).
\end{lemma}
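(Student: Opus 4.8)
The plan is to construct $\tilde f$ as the analytic continuation of $f$ built from its local Taylor expansions, centred for each evaluation point at its nearest real projection, and to read off the uniform bound from a single multinomial resummation. First I would note that the factorial derivative bound makes $f$ real analytic: for every base point $\bs x\in[0,1]^n$ the power series
\[
  T_{\bs x}(\bs w)\isdef\sum_{\bs\alpha\in\Nbb^n}
  \frac{\partial^{\bs\alpha}f(\bs x)}{\bs\alpha!}(\bs w-\bs x)^{\bs\alpha}
\]
makes sense, and Taylor's theorem with remainder (whose remainder is controlled by the same factorial bound) shows it reproduces $f$ on a real neighbourhood of $\bs x$. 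I would then define the candidate extension at $\bs z\in\bs\Sigma(\bs\tau)$ by centring the series at the componentwise nearest-point projection $\bs x=P(\bs z)\in[0,1]^n$, which exists and is unique since each factor $[0,1]$ is compact and convex and satisfies $|z_k-x_k|=\dist(z_k,[0,1])\le\tau_k$.

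The whole estimate then rests on a single computation. Inserting the hypothesis $|\partial^{\bs\alpha}f(\bs x)|\le C_f|\bs\alpha|!/\bs\rho^{\bs\alpha}$ and using $|\bs\alpha|!/\bs\alpha!=\binom{|\bs\alpha|}{\bs\alpha}$ together with the multinomial theorem gives, for $\bs x=P(\bs z)$,
\[
  \sum_{\bs\alpha}\frac{|\partial^{\bs\alpha}f(\bs x)|}{\bs\alpha!}
  \prod_{k=1}^n|z_k-x_k|^{\alpha_k}
  \le C_f\sum_{m=0}^\infty\Big(\sum_{k=1}^n\frac{|z_k-x_k|}{\rho_k}\Big)^m
  \le C_f\sum_{m=0}^\infty\Big(\sum_{k=1}^n\frac{\tau_k}{\rho_k}\Big)^m .
\]
Since $\tau_k\le\rho_kc_k$, the inner sum is bounded by $\sum_kc_k=\gamma<1$, so the geometric series converges to $C_f/(1-\gamma)$. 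This one inequality simultaneously proves absolute, hence locally uniform, convergence of $T_{\bs x}$ on the open weighted ball $B_{\bs x}\isdef\{\bs w:\sum_k|w_k-x_k|/\rho_k<1\}$, on which $T_{\bs x}$ is therefore holomorphic, and yields the claimed bound $|\tilde f(\bs z)|\le C_f/(1-\gamma)$, because every $\bs z\in\bs\Sigma(\bs\tau)$ lies in $B_{P(\bs z)}$ with $\sum_k|z_k-x_k|/\rho_k\le\gamma<1$, so there is even strict slack.

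The step I expect to be the real obstacle is well-definedness and holomorphy of $\tilde f$, since the projection $P$ is not holomorphic and different base points must be shown to produce the same value. I would cover $\bs\Sigma(\bs\tau)$ by the convex balls $B_{\bs x}$, on each of which $T_{\bs x}$ is holomorphic, and show that two series agree on an overlap $B_{\bs x}\cap B_{\bs x'}$, which is convex and hence connected. The key is to exhibit a common real point: if $\bs z\in B_{\bs x}\cap B_{\bs x'}$, the triangle inequality gives $\sum_k|x_k-x_k'|/\rho_k<2$, so the midpoint $\bs m=(\bs x+\bs x')/2\in[0,1]^n$ lies in both balls; on a real neighbourhood of $\bs m$ both $T_{\bs x}$ and $T_{\bs x'}$ equal $f$, whence their difference is holomorphic on the connected overlap and vanishes on an open piece of $\Rbb^n\subset\Cbb^n$, so by the identity theorem it vanishes throughout. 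Thus $\tilde f(\bs z)\isdef T_{P(\bs z)}(\bs z)$ is unambiguous and coincides locally with a single holomorphic power series, so it is holomorphic, in particular $C^\infty$, on $\bs\Sigma(\bs\tau)$; finally $\tilde f|_{[0,1]^n}=f$ since $P$ fixes real points and $T_{\bs x}(\bs x)=f(\bs x)$, which completes the argument.
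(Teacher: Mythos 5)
Your proof is correct and takes essentially the same route as the paper's: local Taylor expansion at real base points, the multinomial resummation that simultaneously gives absolute convergence and the bound $C_f/(1-\gamma)$, agreement with $f$ on the real part of each neighbourhood via the integral-form Taylor remainder, and gluing of the local extensions. The one difference is a welcome gain in rigor rather than a new idea: where the paper covers $[0,1]^n$ by finitely many polydiscs and disposes of the gluing with a one-line ``patching together,'' you cover all of $\bs\Sigma({\bs\tau})$ directly via the nearest-point projection and justify consistency explicitly through the midpoint argument and the identity theorem on the convex overlaps.
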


\begin{proof} 
For \(\bs{z_0} \in [0,1]^d\) arbitrary but fixed, we define 
\[
{D}_{\bs{z}_0} \isdef\{\bs{z} \in \mathbb{C}^d : |z_k-z_{0,k}|
\leq \tau_k\text{ for all }k\leq d\}.
\]
Thus, we have 
\(|z_k-z_{0,k}| \leq \tau_k \leq \rho_k c_k\) for all 
\(\bs{z} \in {D}_{\bs{z}_0}\). From this, we deduce that
\begin{align*}
\sum_{\boldsymbol{\alpha}\geq\boldsymbol{0}}\bigg|
\partial^{\boldsymbol{\alpha}}f(\bs{z_0})\frac{(\bs{z}-
\bs{z_0})^{\boldsymbol{\alpha}}}{\boldsymbol{\alpha}!}\bigg|
&\leq 
c_f\sum_{i=0}^{\infty} \sum_{|\boldsymbol{\alpha}|=i}
\bigg|\bigg(\frac{\bs{z}-\bs{z_0}}{\bs{\rho}}\bigg)^{\boldsymbol{\alpha}}
\bigg|\frac{|\boldsymbol{\alpha}|!}{\boldsymbol{\alpha}!}\leq
c_f\sum_{i=0}^{\infty} \sum_{|\boldsymbol{\alpha}|=i}
\bs{c}^{\bs{\alpha}}\frac{|\boldsymbol{\alpha}|!}{\boldsymbol{\alpha}!}\\
&= c_f\sum_{i=0}^{\infty} \bigg(\sum_{k=1}^{n}c_k\bigg)^i
\leq c_f\sum_{i=0}^{\infty}\gamma^i=\frac{c_f}{1-\gamma},
\end{align*}
where we used the multinomial theorem for the first equality.
This shows that the Taylor series at \(\bs{z}_0\) converges on 
\({D}_{\bs{z}_0}\) and therefore 
\[
\tilde{f}_{\bs{z_0}}\colon D_{\bs{z_0}} \rightarrow \Rbb, 
\quad \bs{z} \mapsto \sum_{\bs{\alpha}\geq \bs{0}}
\bigg|{\partial^{\bs{\alpha}}f(\bs{z}_0)
\frac{(\bs{z}-\bs{z}_0)^{\bs{\alpha}}}{\bs{\alpha}!}}\bigg| 
\]
is analytic. 

We next show that \(\tilde{f}_{\bs{z_0}}\) is an 
analytic extension of \(f\) on \({D}_{\bs{z_0}}\). 
There holds
\begin{align*}
&\bigg|f(\bs{z})-\sum_{|\bs{\alpha}|\leq k-1}{
\partial^{\bs{\alpha}}f(\bs{z})
\frac{(\bs{z}-\bs{z_0})^{\bs{\alpha}}}{\bs{\alpha}!}}\bigg|\\
&\qquad\leq k \sum_{|\bs{\alpha}|=k}\frac{|\bs{z}-\bs{z_0}|^{\bs{\alpha}}}
{\bs{\alpha}!}\left|\int_{0}^{1} (1-t)^{k-1}\partial^{\bs{\alpha}}
f\big(\bs{z_0}+t(\bs{z}-\bs{z_0})\big)\d t \right|\\
&\qquad\leq  c_f k \sum_{|\bs{\alpha}|=k}\frac{|\bs{z}-\bs{z_0}|^{\bs{\alpha}}}
{\bs{\alpha}!}\frac{|\boldsymbol{\alpha}|!}{{\bs\rho}^{\bs\alpha}}
\int_{0}^{1} (1-t)^{k-1}\d t  \leq
c_f\sum_{|\bs{\alpha}|=k}\bs{c}^{\bs{\alpha}}\frac{|\boldsymbol{\alpha}|!}
{{\bs\alpha}!} \\
&\qquad\leq c_f\gamma^k
\end{align*}
for all $k \in \Nbb$. Since \(\gamma < 1 \), if $k$ tends to infinity, 
we obtain \(\tilde{f}_{\bs{z_0}}(\bs{z})=f(\bs{z})\) for all \(\bs{z}\in 
D_{\bs{z_0}}\cap[0,1]^d\) and therefore \(\tilde{f}_{\bs{z_0}}\) is an 
analytic extension of \(f\) on \(D_{\bs{z_0}}\). Since \([0,1]^d\) 
is compact, there exist finitely many points such that \([0,1]^d
\subset\bigcup_{k=1}^m D_{{\bs z}_k}\). Patching together the 
extensions \(\tilde{f}_{{\bs z}_k}({\bs z})\) yields the desired 
analytic extension $\tilde{f}({\bs z})$.
\end{proof}

Inserting a lower bound \(\eta>0\) for the distance of now yields
the analyticity of the transported kernel for any
pair of points \(\hat{\bs x},\hat{\bs y}\in[0,1]^d\) 
satisfying \(\|{\bs B}(\hat{\bs x}-\hat{\bs y})\|_2\geq\eta\).

\begin{corollary}\label{cor:anisoKernelAdm}
Let \(\hat{\bs x},\hat{\bs y}\in[0,1]^d\) be such that 
\(\|{\bs B}(\hat{\bs x}-\hat{\bs y})\|_2\geq\eta\) for some 
\(\eta>0\). Then, each component of the kernel function 
\(\kernel_{\bs B}\) from \eqref{eq:transKernel} admits 
an analytic extension into \(\bs\Sigma({\bs\tau})\) for 
\(\tau_k\in[0,c_k\eta\rho/b_k]\), where \(c_k>0\) and 
\(\gamma\isdef\sum_{k=1}^dc_k<1\).
\end{corollary}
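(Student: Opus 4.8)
The plan is to read Corollary~\ref{cor:anisoKernelAdm} as a direct composition of the two preceding lemmas: Lemma~\ref{lem:transKernelDer} supplies a Cauchy-type derivative bound for the transported kernel, and Lemma~\ref{holomorphic_ext_nD} converts precisely such a bound into a holomorphic extension on $\bs\Sigma(\bs\tau)$. Since the estimate \eqref{eq:trans_kernel_estimate} is symmetric under the exchange $\bs\alpha\leftrightarrow\bs\beta$ (equivalently $\hat{\bs x}\leftrightarrow\hat{\bs y}$), it suffices to extend $\kernel_{\bs B}$ in its first argument; the second component is handled identically. Thus I would first fix $\hat{\bs y}\in[0,1]^n$ and regard $f(\hat{\bs x})\isdef\kernel_{\bs B}(\hat{\bs x},\hat{\bs y})$ as a function of $\hat{\bs x}$ alone.

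Second, I would specialize Lemma~\ref{lem:transKernelDer} to $\bs\beta=\bs 0$, which gives
\[
\big|\partial^{\bs\alpha} f(\hat{\bs x})\big|
\le c_{\kernel}\Big(\frac{\bs b}{\rho}\Big)^{\bs\alpha}
\frac{|\bs\alpha|!}{d_{\bs B}(\hat{\bs x},\hat{\bs y})^{|\bs\alpha|}}.
\]
Invoking the admissibility hypothesis $d_{\bs B}(\hat{\bs x},\hat{\bs y})\ge\eta$ to bound the denominator from below, and merging the factor $(\bs b/\rho)^{\bs\alpha}$ with $\eta^{-|\bs\alpha|}=\prod_k\eta^{-\alpha_k}$ into a single weight vector, yields
\[
\big|\partial^{\bs\alpha} f(\hat{\bs x})\big|
\le c_{\kernel}\,\frac{|\bs\alpha|!}{\tilde{\bs\rho}^{\bs\alpha}},
\qquad \tilde\rho_k\isdef\frac{\rho\eta}{b_k}.
\]
This is exactly the hypothesis of Lemma~\ref{holomorphic_ext_nD} with $C_f=c_{\kernel}$ and $\bs\rho=\tilde{\bs\rho}$.

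Third, I would apply Lemma~\ref{holomorphic_ext_nD} with these data and the prescribed $c_k>0$, $\gamma=\sum_k c_k<1$. The lemma furnishes a holomorphic extension of $f$ on $\bs\Sigma(\bs\tau)$ valid for $\tau_k\in[0,\tilde\rho_k c_k]$; substituting $\tilde\rho_k=\rho\eta/b_k$ turns this range into $\tau_k\in[0,c_k\eta\rho/b_k]$, precisely as claimed. Running the same argument with $\bs\alpha=\bs 0$ in place of $\bs\beta=\bs 0$ extends the component in $\hat{\bs y}$ and completes the proof.

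The one point requiring care — and the step I expect to be the genuine obstacle — is the \emph{scope} of the derivative estimate: Lemma~\ref{holomorphic_ext_nD} demands the bound for every $\hat{\bs x}\in[0,1]^n$, whereas $d_{\bs B}(\hat{\bs x},\hat{\bs y})\ge\eta$ is, read literally, a condition on a single pair. To legitimize the invocation I would read the admissibility as holding uniformly while $\hat{\bs x}$ ranges over $[0,1]^n$ with $\hat{\bs y}$ fixed — the situation actually arising in the multipole setting, where $\hat{\bs x}$ and $\hat{\bs y}$ belong to well-separated clusters so that $\eta$ is a genuine lower bound for the entire cluster. Alternatively, if only a local extension about $\hat{\bs x}$ is required, one can bypass the compactness and patching argument and use only the first, pointwise part of the proof of Lemma~\ref{holomorphic_ext_nD}, where the derivative bound is needed solely at the expansion center.
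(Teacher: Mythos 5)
Your proposal is correct and takes essentially the same route as the paper: both specialize Lemma~\ref{lem:transKernelDer} to \(\bs\beta=\bs 0\) (resp.\ \(\bs\alpha=\bs 0\)), use \(d_{\bs B}(\hat{\bs x},\hat{\bs y})\ge\eta\) to obtain the bound \(c_{\kernel}\big(\bs b/(\eta\rho)\big)^{\bs\alpha}|\bs\alpha|!\), and then apply Lemma~\ref{holomorphic_ext_nD} with \(\rho_k=\eta\rho/b_k\) to get \(\tau_k\in[0,c_k\eta\rho/b_k]\). Your closing caveat about the scope of the separation hypothesis is a real subtlety that the paper's proof silently passes over, and your uniform reading (separation over the whole admissible cluster) is the intended one.
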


\begin{proof}
Due to Lemma~\ref{lem:transKernelDer}, there holds
\[
  \bigg|\frac{\partial^{|\bs\alpha|+|\bs\beta|}}
  	{\partial\hat{\bs x}^{\bs\alpha}
  	\partial\hat{\bs y}^{\bs\beta}}\kernel_{\bs B}
	(\hat{\bs x},\hat{\bs y})\bigg|
  		\le c_{\kernel}\bigg(\frac{\bs b}{\rho}\bigg)^{{\bs\alpha}+{\bs\beta}}
		\frac{(|\bs\alpha|+|\bs\beta|)!}
		{\|{\bs B}(\hat{\bs x}-\hat{\bs y})\|_2^{|\bs\alpha|+|\bs\beta|}}
		\leq c_{\kernel}\bigg(
		\frac{\bs b}{\eta\rho}\bigg)^{{\bs\alpha}+{\bs\beta}}
		(|\bs\alpha|+|\bs\beta|)!.
\]
From this, the assertion is directly obtained by setting 
either \({\bs\alpha}={\bs 0}\) or \({\bs\beta}={\bs 0}\) 
and applying Lemma~\ref{holomorphic_ext_nD}.
\end{proof}

\section{Weighted total degree polynomial approximation}
\label{sec:approximation}
To exploit the available anisotropy for approximation,
we derive corresponding estimates of the error for 
polynomial best approximation and extend them to the
interpolation case. To do so, we start from 
the following lemma that provides an error estimate for
the polynomial approximation of an analytic function. 
It follows directly from \cite[Chapter 7, \S 8]{DL93}.

\begin{lemma}\label{lem:analytic_bound} 
Let \(\Pi_{q}\isdef\spn\{1,\ldots,x^q\}\) denote the 
space of all univariate polynomials up to degree $q$.
Given a function \(f\in C([0,1])\) which admits an 
analytic extension \(\tilde{f}\) into the region 
\(\Sigma(\tau)\) for some \(\tau>0\), there holds,
for \(1<\rho\isdef2\tau+\sqrt{1+4\tau^{2}}\), that
\[
    \min_{w \in \Pi_{q}}\| f-w\|_{C([0,1])}\leq 
    \frac{2\rho}{\rho - 1}e^{-(q+1)\log\rho}\|\tilde{f}\|_{C(\Sigma(\tau))}.
\]
\end{lemma}

Especially, it is shown in \cite{DL93} that the approximating polynomial 
in Lemma~\ref{lem:analytic_bound} is unique. We may therefore define 
the projection operator
\begin{equation}\label{eq:projection}
U_q\colon C([0,1])\to\Pi_q,\quad U_qf
\isdef\argmin_{w\in\Pi_q}\|f-w\|_{C([0,1])}.
\end{equation}
By observing
\[
  (U_0f)(x) = \frac{1}{2}\bigg(\min_{y\in[0,1]} f(y)+\max_{y\in[0,1]}f(y)\bigg),
\]
we infer the stability estimate
\[
  \|U_0 f\|_{C([0,1])}\le\|f\|_{C([0,1])},
\]
i.e., the projection $U_0$ is stable with 
constant equal to 1.

With the projections onto the spaces \(\Pi_q\) at hand, we define 
the \emph{detail projections}
\[
\Delta_q\isdef U_{q}-U_{q-1},\quad \Delta_0\isdef U_0,\quad q=1,2,\ldots,
\]
which we will use in the subsequent construction of the 
weighted total degree approximation and the respective 
error. The next lemma gives a bounds the projection error of 
the detail projections.

\begin{lemma}\label{lem:detailInterpolation}
Given a function \(f\in C([0,1])\), which admits an 
analytic extension $\tilde{f}$ into the region 
\(\Sigma(\tau)\) for some \(\tau>0\), there holds for any \(q\in\Nbb\) that
\[
\|\Delta_qf\|_{C([0,1])}\leq 2^{\min\{1,q\}}c_{\rho}e^{-q\log\rho}
\|\tilde{f}\|_{C(\Sigma(\tau))}
\]
for \(1<\rho\isdef2\tau+\sqrt{1+4\tau^{2}}\) and
\[
c_\rho\isdef
\frac{\rho+1}{\rho-1}.
\]
\end{lemma}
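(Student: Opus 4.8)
The plan is to reduce everything to Lemma~\ref{analytic_bound} via the triangle inequality. The key observation is that, by definition of the projection $U_q$ in \eqref{eq:projection}, the residual $\|f-U_qf\|_{C([0,1])}$ equals the best-approximation error $\min_{w\in\Pi_q}\|f-w\|_{C([0,1])}$, so Lemma~\ref{analytic_bound} applies to it verbatim, yielding the bound $\tfrac{2\rho}{\rho-1}e^{-(q+1)\log\rho}\|\tilde f\|_{C(\Sigma(\tau))}$.

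First, for $q\geq 1$, I would rewrite the detail projection as a difference of two residuals,
\[
\Delta_qf = U_qf-U_{q-1}f = (f-U_{q-1}f)-(f-U_qf),
\]
and estimate it by $\|f-U_{q-1}f\|_{C([0,1])}+\|f-U_qf\|_{C([0,1])}$. Applying Lemma~\ref{analytic_bound} to each summand (at degrees $q-1$ and $q$, respectively) produces a bound of the form $\frac{2\rho}{\rho-1}\big(e^{-q\log\rho}+e^{-(q+1)\log\rho}\big)\|\tilde f\|_{C(\Sigma(\tau))}$. Factoring out $e^{-q\log\rho}$ and using $e^{-\log\rho}=1/\rho$ collapses the bracket to $1+1/\rho=(\rho+1)/\rho$; multiplying by the prefactor $\frac{2\rho}{\rho-1}$ gives exactly $c(\rho)=\frac{2(\rho+1)}{\rho-1}$, which is the claimed constant.

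Finally, I would treat the base case $q=0$ separately, since there $\Delta_0=U_0$ rather than a difference. Here the natural bound is $\|U_0f\|_{C([0,1])}\leq \|f-U_0f\|_{C([0,1])}+\|f\|_{C([0,1])}$; the first term is controlled by Lemma~\ref{analytic_bound} at $q=0$ (giving $\frac{2}{\rho-1}\|\tilde f\|_{C(\Sigma(\tau))}$), while $\|f\|_{C([0,1])}\leq\|\tilde f\|_{C(\Sigma(\tau))}$ because $[0,1]\subset\Sigma(\tau)$. Summing gives $\frac{\rho+1}{\rho-1}\|\tilde f\|_{C(\Sigma(\tau))}\leq c(\rho)\|\tilde f\|_{C(\Sigma(\tau))}$, which is consistent with the stated bound at $q=0$, where $e^{-q\log\rho}=1$.

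I do not expect a genuine obstacle: the argument is a direct application of Lemma~\ref{analytic_bound} together with the triangle inequality. The only points requiring care are the algebraic bookkeeping needed to recover the precise constant $c(\rho)$ from the two geometrically decaying terms, and the separate (easy) treatment of the $q=0$ case so that the single constant $c(\rho)$ covers all $q\geq 0$.
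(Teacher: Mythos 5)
Your proposal is correct and follows essentially the same route as the paper's own proof: the triangle inequality $\|\Delta_qf\|\leq\|f-U_{q-1}f\|+\|f-U_qf\|$ combined with Lemma~\ref{analytic_bound} at degrees $q-1$ and $q$, then the substitution $e^{-\log\rho}=1/\rho$ to recover $c(\rho)=\frac{2(\rho+1)}{\rho-1}$. The only difference is cosmetic: where the paper dismisses the $q=0$ case as ``obviously true,'' you spell out the bound $\|U_0f\|\leq\|f-U_0f\|+\|f\|\leq\frac{\rho+1}{\rho-1}\|\tilde f\|_{C(\Sigma(\tau))}$ explicitly, which is a valid (and arguably clearer) justification.
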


\begin{proof}
For $q=0$, the desired estimate is obviously true by
\[
  \|\Delta_0 f\|_{C([0,1])}\leq\|f\|_{C([0,1])}
  \le\frac{\rho+1}{\rho-1}\|\tilde{f}\|_{C(\Sigma(\tau))}
\]
and $e^{-\log\rho} = 1/\rho$.
For $q\ge 1$, we conclude by using
Lemma~\ref{lem:analytic_bound} again that
\begin{align*}
\|\Delta_qf\|_{C([0,1])}&\leq\|U_{q}f-f\|_{C([0,1])}
+\|f-U_{q-1}f\|_{C([0,1])}\\
&\leq\frac{2\rho}{\rho-1}\big(e^{-\log\rho}+1\big)
e^{-q\log\rho}\|\tilde{f}\|_{C(\Sigma(\tau))}.
\end{align*}
Inserting the identity $e^{-\log\rho} = 1/\rho$ yields
the desired claim.
\end{proof}

The constant \(c_{\rho}\) introduced in Lemma~\ref{lem:detailInterpolation}
obviously satisfies \(c_{\rho}\searrow 1\) for \(\rho\to\infty\)
The next lemma  shows a bound on the product $\prod_{k=1}^dc_{\rho_k}$ of this 
constant for $d\to\infty$ when the sequence \(\{\rho_k^{-1}\}_k\) is summable. 
This bound will be used later in the multivariate case.

\begin{lemma}\label{lem:constbound}
Let \(\{\rho_k^{-1}\}_k\in\ell^1(\Nbb)\). Then there exists a constant
\(c_{\Pi}>0\), independent of \(d\), such that
\[
\prod_{k=1}^dc_{\rho_k}\leq c_\Pi.
\]
\end{lemma}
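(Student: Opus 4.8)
The plan is to separate the universal factor $2^n$ from a convergent infinite product. Writing $c(\rho_k)=2(\rho_k+1)/(\rho_k-1)=2\bigl(1+2/(\rho_k-1)\bigr)$, we obtain
\[
\prod_{k=1}^n c(\rho_k)=2^n\prod_{k=1}^n\Bigl(1+\frac{2}{\rho_k-1}\Bigr),
\]
so it suffices to bound the second product by a constant independent of $n$. Since every $\rho_k=2\tau_k+\sqrt{1+4\tau_k^{2}}>1$, each factor is a finite number $1+a_k$ with $a_k\isdef 2/(\rho_k-1)\ge 0$, and the product is monotonically increasing in $n$; hence it is enough to show that $\sup_n\prod_{k=1}^n(1+a_k)<\infty$, equivalently, using $1+x\le e^x$ for $x\ge 0$, that $\sum_{k\ge 1}a_k<\infty$.

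To establish summability of $\{a_k\}$, I would exploit that $\{\rho_k^{-1}\}_k\in\ell^1(\Nbb)$ forces $\rho_k\to\infty$. Therefore there is an index $k_0$ such that $\rho_k\ge 2$, and hence $\rho_k-1\ge\rho_k/2$, for all $k\ge k_0$; for these indices $a_k=2/(\rho_k-1)\le 4/\rho_k$, so that $\sum_{k\ge k_0}a_k\le 4\sum_{k\ge k_0}\rho_k^{-1}<\infty$. The remaining terms $a_1,\ldots,a_{k_0-1}$ form a finite sum of finite numbers. Combining these, $\sum_{k\ge1}a_k\defis S<\infty$, whence $\prod_{k=1}^n(1+a_k)\le e^{S}$ for every $n$, and the claim follows with $c\isdef e^{S}$.

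The only delicate point is the behaviour of the factors for which $\rho_k$ is close to $1$, where $c(\rho_k)$, and thus $a_k$, can be arbitrarily large. This is precisely where the structural reduction pays off: the summability hypothesis guarantees $\rho_k\to\infty$, so only finitely many indices can have $\rho_k$ near $1$, and their contribution is absorbed into the finite partial sum $a_1+\cdots+a_{k_0-1}$ rather than into the tail. Thus no uniform lower bound on $\rho_k$ is required, and the resulting constant $c$ depends only on the $\ell^1$-norm of $\{\rho_k^{-1}\}$ together with these finitely many initial terms, but never on $n$.
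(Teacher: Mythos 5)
Your proof is correct and follows essentially the same route as the paper: split off the factor $2^n$ via $c(\rho_k)=2\bigl(1+\tfrac{2}{\rho_k-1}\bigr)$, reduce boundedness of the remaining product to summability of $\sum_k \tfrac{2}{\rho_k-1}$ (you via $1+x\le e^x$, the paper via exponentiating the logarithms and the limit comparison test --- the same mechanism), and handle the tail by choosing $k_0$ with $\rho_k\ge 2$ so that $\tfrac{1}{\rho_k-1}\le\tfrac{2}{\rho_k}$, absorbing the finitely many initial terms. Your explicit constant $c=e^{S}$ and the remark that no uniform lower bound on $\rho_k$ is needed are welcome clarifications, but the argument is the paper's own.
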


\begin{proof}
There holds
\[
\prod_{k=1}^dc_{\rho_k}=\prod_{k=1}^d\frac{\rho_k+1}{\rho_k - 1}
\leq\prod_{k=1}^\infty\frac{\rho_k+1}{\rho_k - 1}
=e^{\sum_{k=1}^\infty\log\frac{\rho_k+1}{\rho_k - 1}}.
\]
We infer that the product is bounded when 
\[
\sum_{k=1}^\infty\log\frac{\rho_k+1}{\rho_k - 1} 
= \sum_{k=1}^\infty\log\bigg(1+\frac{2}{\rho_k - 1}\bigg) <\infty.
\]
The sum converges by the limit comparison test provided that
\[
\sum_{k=1}^\infty\frac{2}{\rho_k - 1}
= 2\sum_{k=1}^\infty\frac{1}{\rho_k - 1}
\]
converges. In view of \(\{\rho_k^{-1}\}\in\ell^1(\Nbb)\), there 
exists an index \(k_0\) such that 
\[
\frac{1}{\rho_k-1}\leq\frac{2}{\rho_k}
\] 
or \(\rho_k\geq 2\) for all \(k>k_0\). Thus, we arrive at
\[
\sum_{k=1}^\infty\frac{1}{\rho_k - 1}=
\sum_{k=1}^{k_0}\frac{1}{\rho_k - 1}+
\sum_{k=k_0+1}^{\infty}\frac{1}{\rho_k - 1}
\leq \sum_{k=1}^{k_0}\frac{1}{\rho_k - 1}
+\sum_{k=k_0+1}^{\infty}\frac{2}{\rho_k}<\infty.
\]
Here, the first sum is bounded as it only contains 
finitely many terms and the second one is bounded due to 
\(\{\rho_k^{-1}\}\in\ell^1(\Nbb)\). This completes the proof.
\end{proof}

To define the weighted total degree approximation, 
we introduce the \emph{weighted total degree index sets}
\[
\Lambda_{\boldsymbol{\omega},q,d}\isdef
\bigg\{{\bf 0} \leq \boldsymbol{\alpha} \in \Nbb^{d}: 
\sum_{\ell=1}^d\omega_\ell\alpha_\ell \leq q\bigg\}
\]

for a weight vector \(\boldsymbol{\omega}\in [0,\infty)^d\).
The cardinality of the sets \(\Lambda_{\boldsymbol{\omega},q,d}\) has 
been estimated in \cite{HHPS18}. We particularly recall here
\cite[Lemma 5.5]{HHPS18}.

\begin{lemma}\label{lem:indexBound}
Let the weight vector \(\boldsymbol{\omega}
=[\omega_1,...,\omega_d]\) 
satisfy \(\omega_k\geq r\log k\) for some
constant \(r>1\) and let it be sorted in ascending order.
Then, the cardinality of the set \(\Lambda_{\boldsymbol{\omega},q,d}\)
is, for \(d>2\), bounded by 
\begin{equation}\label{eq:IndexSetBound}
|\Lambda_{\boldsymbol{\omega},q,d}|\leq c_r\log(d)^{\frac q r}
\end{equation}
with a constant \(c_r\) independent of \(d\).
\end{lemma}

Given the index set \(\Lambda_{\boldsymbol{\omega},q,d}\), we define
the associated polynomial space according to
\[
\Pcal_{\bs\omega,q}\isdef\operatorname{span}
\{{\bs x}^{\bs\alpha}:{\bs\alpha}\in \Lambda_{\boldsymbol{\omega},q,d}\}.
\]
Note 
that $\Pcal_{\bs\omega,q}$ coincides with the usual space 
of total degree polynomials up to degree $q$ if \(\omega_1
=\ldots=\omega_d=1\).
With respect to \(\Pcal_{\bs\omega,q}\), we introduce
the \emph{weighted total degree approximation operator 
of degree} \(q \in \Nbb\) given by
\[
P_{\boldsymbol{\omega},q}\colon C([0,1]^d)
\to\Pcal_{\bs\omega,q}, \quad
P_{\boldsymbol{\omega},q}f\isdef \sum_{\boldsymbol{\alpha} 
\in \Lambda_{\boldsymbol{\omega},q,d}}\Delta_{\bs\alpha}f.
\]

The estimate for the approximation error entailed by \(P_{\bs\omega,q}\)
is based on the polynomial analogue of dominating mixed smoothness
of the underlying function.
This mixed smoothness is exploited by the subsequent error estimate for the
tensor product of detail approximations.
For the sake of a lighter notation, in what follows, given an operator \(O\), we
define its \(k\)-fold tensor product according to
\[
O^{(k)}\isdef\underbrace{O\otimes\cdots\otimes O}_{\text{$k$-times}}.
\]

\begin{lemma}\label{lem:tpDetailError}
Given a function \(v\in C([0,1]^d)\) which admits an analytic
extension \(\tilde{v}\) into the region \(\bs\Sigma({\bs\tau})\) 
for \(0<\tau_1\leq\ldots\leq\tau_d\), there holds for 
\(1<\rho_k\isdef2\tau_k+\sqrt{1+4\tau_k^{2}}\), $k=1,\ldots,d$, that
\[
\|( \Delta_{\alpha_1}\otimes \cdots \otimes  \Delta_{\alpha_d})v\|_{C([0,1]^d)}
\leq \bigg(\prod_{k=1}^d
2^{\min\{1,\alpha_k\}}c_{\rho_k}\bigg)
e^{-\sum_{k=1}^d\alpha_k\log\rho_k}\|\tilde{v}\|_{C(\bs\Sigma({\bs\tau}))}.
\]
\end{lemma}

\begin{proof}
There holds by Lemma~\ref{lem:detailInterpolation}
\begin{align*}
&\|( \Delta_{\alpha_1}\otimes \cdots \otimes\Delta_{\alpha_d})v\|_{C([0,1]^d)}\\
&\qquad\leq 2^{\min\{1,\alpha_1\}}c_{\rho_1}e^{-\alpha_1\log\rho_1}\|(\operatorname{Id}\otimes\Delta_{\alpha_2} 
\cdots \otimes  \Delta_{\alpha_d})\tilde{v}\|_{C(\Sigma(\tau_1))}\\
&\qquad\leq 2^{\min\{1,\alpha_1\}}c_{\rho_1}2^{\min\{1,\alpha_2\}}c_{\rho_2}e^{-\alpha_1\log\rho_1-\alpha_2\log\rho_2}\\
&\hspace{4cm}\times
\|(\operatorname{Id}^{(2)}\otimes \Delta_{\alpha_3}\otimes\cdots\otimes  
\Delta_{\alpha_d})\tilde{v}\|_{C(\Sigma(\tau_1)\times\Sigma(\tau_2))}\\[-1ex]
&\qquad{}\ \,\vdots\\[-2ex]
&\qquad\leq \bigg(\prod_{k=1}^d
2^{\min\{1,\alpha_k\}}c_{\rho_k}\bigg)e^{-\sum_{k=1}^d\alpha_k\log\rho_k}
\|\tilde{v}\|_{C(\bs\Sigma({\bs\tau}))}.
\end{align*}
\end{proof}

The following theorem is the polynomial
best approximation version of a similar result
for quadrature in \cite{HHPS18}.
This theorem provides a bound
of the error of the polynomial best 
approximation. The generic constant \(c_\Pi\)
in this bound is independent of the
dimension $d$ under the condition
that the inverses of the convergence
radii form a summable sequence.

\begin{theorem}\label{th:errorEstimate}
Let \(v\in C([0,1]^d)\) admit an analytic
extension \(\tilde{v}\) into the region 
\(\bs\Sigma({\bs\tau})\) for a monotonously increasing sequence
\(0<\tau_1\leq\ldots\leq\tau_d\le\dots\)
and \(\{\tau_k^{-1}\}\in\ell^1(\Nbb)\).
Define
\(1<\rho_k\isdef2\tau_k+\sqrt{1+4\tau_k^{2}}\) for $k=1,\ldots,d$
and introduce the weights \(\omega_k\isdef\log\rho_k\).
Then, there holds
\[
\big\|(\operatorname{Id}^{(d)} - 
P_{\boldsymbol{\omega},q})v\big\|_{C([0,1]^d)}\leq
c_{\Pi}|\Lambda_{\boldsymbol{\omega},q,d-1}|2^{d+1}e^{-q}
\|\tilde{v}\|_{C(\bs\Sigma({\bs\tau}))}
\]
with the constant \(c_{\Pi}>0\) introduced in 
Lemma~\ref{lem:constbound}, being independent of \(d\).
\end{theorem}

\begin{proof}
We perform the proof along the lines of \cite{HHPS18,NTW08}.
The difference between the identity and the weighted total degree approximation
can be represented as
\begin{equation}\label{eq:errorRep}
\operatorname{Id}^{(d)} - P_{\boldsymbol{\omega},q} 
= \sum_{k=1}^d R(q,k)\otimes\operatorname{Id}^{(n-k)}.
\end{equation}
Herein, the quantity \(R(q,d)\), for \(n\geq 1\), is defined by
\[
R(q,1)\isdef\operatorname{Id}-U_{\iota_{{\bs\omega},q}({\bs 0},1)}
\]
for \(k=1\) and as
\[
R(q,k)\isdef
\sum_{\bs \alpha \in \Lambda_{\boldsymbol{\omega},q,k-1}}
\Delta_{\alpha_1}\otimes\cdots\otimes\Delta_{\alpha_{k-1}}
\otimes(\operatorname{Id}-U_{\iota_{{\bs\omega},q}({\bs\alpha},k)})
\]
for \(k\geq 1\), where we made use of the definition
\[
\iota_{{\bs\omega},q}({\bs\alpha},k)\isdef
\bigg\lfloor\frac{q-\sum_{\ell=1}^{k-1}\alpha_\ell \omega_\ell}{\omega_k}\bigg\rfloor.
\]

To prove the assertion, we estimate each summand of the error representation \eqref{eq:errorRep}
individually.
Due to \(2\rho_1/(\rho_1-1)\leq 2c_{\rho_1}\), we obtain
for \(k=1\) that
\[
\big\|R(q,1)\otimes\operatorname{Id}^{(n-1)}v\big\|_{C([0,1]^d)}\leq
 2 c_{\rho_1}e^{-(\iota_{{\bs\omega},q}({\bs 0},1)+1)\log\rho_1}
 \|\tilde{v}\|_{C(\bs\Sigma(\bs\tau))}
 \leq 2 c_{\rho_1} e^{-q}\|\tilde{v}\|_{C(\bs\Sigma(\bs\tau))}
\]
by Lemma~\ref{lem:analytic_bound}.

For \(k>1\), we employ Lemma~\ref{lem:analytic_bound},
Lemma~\ref{lem:constbound}, and
Lemma~\ref{lem:tpDetailError} to obtain
\begin{align*}
&\big\|R(q,k)\otimes\operatorname{Id}^{(n-k)}v\big\|_{C([0,1]^d)}\\
&\qquad\leq
\sum_{\bs\alpha \in \Lambda_{\boldsymbol{\omega},q,k-1}}
\big\|\Delta_{\alpha_1}\otimes\cdots\otimes\Delta_{\alpha_{k-1}}
\otimes(\operatorname{Id}-U_{\iota_{{\bs\omega},q}({\bs\alpha},k)})
\otimes\operatorname{Id}^{(n-k)}v\big\|_{C([0,1]^d)}\\
&\qquad\leq 2c_{\rho_k}\sum_{\bs\alpha \in \Lambda_{\boldsymbol{\omega},q,k-1}}
\bigg(\prod_{\ell=1}^{k-1}2^{\min\{1,\alpha_\ell\}}c_{\rho_\ell}\bigg)
e^{-\sum_{\ell=1}^{k-1}\alpha_\ell\log\rho_\ell}
e^{-(\iota_{{\bs\omega},q}({\bs\alpha},k)+1)\log\rho_k}
\|\tilde{v}\|_{C(\bs\Sigma({\bs\tau}))}\\
&\qquad\leq 2c_\Pi\bigg(\sum_{\bs\alpha \in \Lambda_{\boldsymbol{\omega},q,k-1}}
\prod_{\ell=1}^{k-1}2^{\min\{1,\alpha_\ell\}}
\bigg)e^{-q}\|\tilde{v}\|_{C(\bs\Sigma({\bs\tau}))},
\end{align*}
where we used \(\omega_\ell=\log\rho_\ell\) for \(\ell=1\ldots,k\)
in the second last step. Estimating the product by
the crude bound \(2^{k-1}\), we arrive at
\[
\big\|R(q,k)\otimes\operatorname{Id}^{(n-k)}v\big\|_{C([0,1]^d)}\leq
c_{\Pi} |\Lambda_{\boldsymbol{\omega},q,k-1}|2^ke^{-q}
\|\tilde{v}\|_{C(\bs\Sigma({\bs\tau}))}.
\]
The error estimate is now obtained by summation
via the geometric series according to
\begin{align*}
\big\|(\operatorname{Id}^{(d)} - P_{\boldsymbol{\omega},q})v\big\|_{C([0,1]^d)}
&\leq c_{\Pi}\sum_{k=1}^d|\Lambda_{\boldsymbol{\omega},q,k-1}|2^ke^{-q}
\|\tilde{v}\|_{C(\bs\Sigma({\bs\tau}))}\\
&\leq c_{\Pi} |\Lambda_{\boldsymbol{\omega},q,d-1}|2^{d+1}e^{-q}
\|\tilde{v}\|_{C(\bs\Sigma({\bs\tau}))}.
\end{align*}
\end{proof}

In view of Lemma~\ref{lem:indexBound}, we can estimate the number of degrees of freedom in the polynomial space $\Pcal_{\boldsymbol{\omega},q}$ by
\[
|\Lambda_{\boldsymbol{\omega},q,d-1}|\le c_r
 e^{\frac{q}{r}\log\log(d)}
\]
and Theorem~\ref{th:errorEstimate} implies the 
following estimate on the approximation error by total degree polynomials in 
case of algebraically increasing regions of analyticity.

\begin{corollary}\label{cor:withConstants}
Let \(v\in C([0,1]^d)\) admit an analytic
extension \(\tilde{v}\) into the region 
\(\bs\Sigma({\bs\tau})\) for a monotonously increasing sequence
\(0<\tau_1\leq\ldots\leq\tau_d\le\dots\)
and \(\tau_k\geq ck^r\) for some \(c,r>1\).
Define
\(1<\rho_k\isdef2\tau_k+\sqrt{1+4\tau_k^{2}}\) for $k=1,\ldots,d$
and introduce the weights \(\omega_k\isdef\log\rho_k\).
Then, there holds
\[
\big\|(\operatorname{Id}^{(d)} - P_{\boldsymbol{\omega},q})v\big\|_{C([0,1]^d)}
\leq
c_r2^{d+1} e^{-q(1-\frac 1 r\log\log d)}\|\tilde{v}\|_{C(\bs\Sigma({\bs\tau}))}
\]
with the constant \(c_r\) from \eqref{eq:IndexSetBound}.
\end{corollary}

\begin{remark}
The rate of convergence in the previous corollary 
is dependent on the dimension, but this dependency
is extremely mild as we have $\log\log d < 2$ for 
dimensions \(d\leq 1000\). Therefore, given that 
\(r\geq 2\), it still guarantees exponential 
convergence in \(q\).
\end{remark}

A straightforward consequence of the preceding theorem
is the following estimate on the interpolation error in 
case of algebraically increasing regions of analyticity.

\begin{corollary}\label{cor:interpest}
Let \(v\in C([0,1]^d)\) admit an analytic
extension \(\tilde{v}\) into the region 
\(\bs\Sigma({\bs\tau})\) for a monotonously increasing sequence
\(0<\tau_1\leq\ldots\leq\tau_d\le\dots\)
and \(\tau_k\geq ck^r\) for some \(c,r>1\).
Define
\(1<\rho_k\isdef2\tau_k+\sqrt{1+4\tau_k^{2}}\) for $k=1,\ldots,d$
and introduce the weights \(\omega_k\isdef\log\rho_k\).
Then, there holds for the interpolation 
\(I_{\boldsymbol{\omega},q}\colon C([0,1]^d)\to\Pcal_{\boldsymbol{\omega},q}\)
with Lebesgue constant \(L_{\boldsymbol{\omega},q}\)
that
\[
\big\|(\operatorname{Id}^{(d)} - I_{\boldsymbol{\omega},q})v\big\|_{C([0,1]^d)}
\leq(1+L_{\boldsymbol{\omega},q})c(r)2^{d+1} 
e^{-q(1-\frac 1 r\log\log d)}\|\tilde{v}\|_{C(\bs\Sigma({\bs\tau}))}
\]
with the constant \(c_r\) from \eqref{eq:IndexSetBound}.
\end{corollary}
\begin{proof}
Due to \(I_{\bs\omega,q}P_{\bs\omega,q}v=P_{\bs\omega,q}v\),
there holds by the triangle inequality
\begin{align*}
\big\|(\operatorname{Id}^{(d)} - I_{\boldsymbol{\omega},q})v\big\|_{C([0,1]^d)}
&\leq\big\|(\operatorname{Id}^{(d)} - 
P_{\boldsymbol{\omega},q})v\big\|_{C([0,1]^d)}+\big\|(P_{\boldsymbol{\omega},q} 
- I_{\boldsymbol{\omega},q})v\big\|_{C([0,1]^d)}\\
&\leq(1+L_{\boldsymbol{\omega},q})
\big\|(\operatorname{Id}^{(d)} - P_{\boldsymbol{\omega},q})v\big\|_{C([0,1]^d)}.
\end{align*}
The assertion follows now from an application of 
Corollary~\ref{cor:withConstants}.
\end{proof}

\section{The dimension weighted fast multipole method}
\label{sec:multipole}
In this section, we introduce the dimension weighted fast multipole method.
and provide the corresponding approximation error estimates.
\subsection{Clustering}
The first step is to provide a suitable hierarchical clustering
of the set of data sites \(X=\{{\bs x}_1,\ldots,{\bs x}_N\}\subset\Rbb^d\).
To this end, we introduce the concept of a \emph{cluster tree}.

\begin{definition}\label{def:cluster-tree}
Let $\mathcal{T}=(V,E)$ be a tree with vertices $V$ and edges $E$.
We define its set of leaves as
\(
\mathcal{L}(\mathcal{T})\isdef\{\nu\in V\colon\nu~\text{has no children}\}.
\)
The tree $\mathcal{T}$ is a \emph{cluster tree} for
the set $X=\{{\bs x}_1,\ldots,{\bs x}_N\}$, iff
$X$ is the {root} of $\mathcal{T}$ and
all $\nu\in V\setminus\mathcal{L}(\mathcal{T})$
are disjoint unions of their children.
For \(\nu\neq X\), we denote the parent cluster of \(\nu\)
by \(\operatorname{parent}(\nu)\).
The \emph{level} \(j_\nu\) of $\nu\in\mathcal{T}$ is its distance from
the root
and the \emph{bounding box} $B_{\nu}$ 
is the smallest axis-parallel cuboid that 
contains all points of \(\nu\).
The \emph{depth} of the cluster tree is given by 
\(J\isdef\max_{\nu\in\Tcal}j_\nu\). The set of clusters
on level $j$ as given by
\(
\mathcal{T}_j\isdef\{\nu\in\mathcal{T}\colon \nu~\text{has level}~j\}
\).
\end{definition}

Different possibilities exist for the branching degree within the
tree. Uniformly subdividing a given bounding box into \(2^d\)
congruent boxes yields a \(2^d\)-tree. For the sake of simplicity,
we focus here on binary trees, which are equivalent to \(2^d\) trees
by combining \(d\) successive levels to a single one.
In particular, we assume that two clusters on the same level have
approximately the same cardinality leading to the concept of
a \emph{balanced binary tree}.

\begin{definition}
Let $\Tcal$ be a cluster tree for $X$ with depth $J$. 
$\Tcal$ is called a \emph{balanced binary tree}, if all 
clusters $\nu$ satisfy the following conditions:
\begin{enumerate}
\item[(\emph{i})]
The cluster $\nu$ has exactly two children
if $j_{\nu} < J$. It has no sons if $j_{\nu} = J$.
\item[(\emph{ii})]
We have $|\nu|\sim 2^{J-j_{\nu}}$,
where $|\nu|$ denotes the
number of points contained in \({\nu}\).
\end{enumerate}
\end{definition}

A balanced binary tree can be constructed by \emph{cardinality 
balanced clustering}. This means that the root cluster 
is split into two child clusters of identical (or similar)
cardinality. This process is repeated recursively for the 
resulting child clusters until their cardinality falls below a 
certain threshold.
For the subdivision, the cluster's bounding box
is split along its longest edge such that the 
resulting two boxes both contain an equal number of points.
Thus, as the cluster cardinality halves with each level, 
we obtain $\mathcal{O}(\log d)$ levels in total. 
The total cost for constructing the cluster tree
is therefore $\mathcal{O}(N \log d)$.

\subsection{Farfield approximation}
Corollary~\ref{cor:anisoKernelAdm} suggests that the interaction
of points that are sufficiently distant can be approximated
by a polynomial without compromising the accuracy. This leads
to the concept of the \emph{admissibility condition}.

\begin{definition}
The clusters \(\nu,\nu'\in\Tcal\) with \(j_\nu=j_{\nu'}\) are called
\emph{admissible} iff
\begin{equation}\label{eq:admissibility}
\dist(B_\nu,B_\nu')\geq\eta
\max\big\{\operatorname{diam}B_\nu,\operatorname{diam}B_\nu'\big\}
\end{equation}
holds for some \(\eta>0\).

Letting 
\[\Tcal\boxtimes\Tcal\isdef
\{\nu\times\nu'\colon \nu,\nu'\in\Tcal,\ j_\nu=j_\nu'\}
\] denote the levelwise
Cartesian product of the cluster tree \(\Tcal\), 
the largest collection of 
admissible blocks \(\nu\times\nu'
\in\mathcal{T}\boxtimes\mathcal{T}\) such that 
\(\operatorname{parent}(\nu)\times\operatorname{parent}(\nu')\)
is called the \emph{farfield} \(\mathcal{F}\subset\Tcal\boxtimes\Tcal\).
The remaining non-admissible blocks correspond to the \emph{nearfield} 
\(\mathcal{N}\subset\mathcal{T}\boxtimes\mathcal{T}\).
\end{definition}

For the interpolation of the kernel, we consider the univariate 
Legendre polynomials \(\hat{p}_j
\), \(j=0,\ldots, q\), satisfying
\(\Pi_q=\spn\{\hat{p}_0,\ldots,\hat{p}_q\}\) 
such that
\[
\int_0^1 \hat{p}_i(t)\hat{p}_j(t)\d t=\delta_{i,j},
\]
as well as their tensorized version
\[
\hat{p}_{\bs\alpha}({\bs x})\isdef\hat{p}_{\alpha_1}(x_1)
\cdots \hat{p}_{\alpha_d}(x_n).
\]
In particular, there holds
\[
\int_{[0,1]^d}\hat{p}_{\bs\alpha}({\bs x})
\hat{p}_{\bs\alpha'}({\bs x})\d{\bs x}
=\delta_{\bs\alpha,\bs\alpha'}.
\]

Given an admissible cluster \(\nu\times\nu'\in\Fcal\), we now approximate
\begin{equation}\label{eq:KernelApprox}
\Kcal({\bs x},{\bs y})\approx\widetilde{\Kcal}({\bs x},{\bs y})\isdef
\sum_{{\bs\alpha},{\bs\alpha}'\in X_{{\bs\omega},q,d}}
c_{\bs\alpha,\bs\alpha'}^{\nu,\nu'}p_{\bs\alpha}^{\nu}({\bs x})p_{\bs\alpha'}^{\nu'}({\bs y})
\end{equation}
with coefficients \(c_{\bs\alpha,\bs\alpha'}^{\nu,\nu'}\in\Rbb\) and the
transported polynomials 
\[
p^\nu_{\bs\alpha}\isdef\hat{p}_{\bs\alpha}\circ a_{\nu}^{-1}.
\] 
Here, \(a_\nu\colon[0,1]^d\to\nu\) is the affine mapping
that maps the unit hypercube to a cluster \(\nu\in\Tcal\).

\begin{remark}
The approximation \eqref{eq:KernelApprox} amounts to a tensor product 
approximation between the two kernel components. The cost of this
expansion is, however, governed by the number of polynomials
\(n_X\isdef\#\Lambda_{{\bs\omega},q,d}\), i.e., the size of 
\({\bs C}_{\nu,\nu'}\in\Rbb^{n_X\times n_X}\). In particular, 
any sort of sparse approximation between the kernel components 
results in a sparsification of the matrix \({\bs C}_{\nu,\nu'}\) 
but does not impact the overall cost.
\end{remark}

To compute the coefficients \(c_{\bs\alpha,\bs\alpha'}^{\nu,\nu'}\) in \eqref{eq:KernelApprox}
for \({\bs\alpha},{\bs\alpha}'\in \Lambda_{{\bs\omega},q,d}\), we assume that there
exists a sequence of interpolation nodes \(\hat{\bs\xi}_{\bs\alpha}\in[0,1]^d\),
\({\bs\alpha}\in \Lambda_{{\bs\omega},q,d}\), such that the Vandermonde matrix
\begin{equation}\label{eq:Vandermonde}
{\bs V}\isdef\big[\hat{p}_{\bs\alpha'}(\hat{\bs\xi}_{\bs\alpha})\big]_{\bs\alpha,\bs\alpha'\in \Lambda_{{\bs\omega},q,d}}
\end{equation}
has full rank. Hence, we obtain the
coefficients \(c_{\bs\alpha,\bs\alpha'}^{\nu,\nu'}\) in matrix notation
according to
\[
{\bs C}_{\nu,\nu'}\isdef
[c_{\bs\alpha,\bs\alpha'}^{\nu,\nu'}]_{\bs\alpha,
\bs\alpha'\in \Lambda_{{\bs\omega},q,d}}
={\bs V}^{-1}{\bs S}_{\nu,\nu'}{\bs V}^{-\intercal}
\]
with the coupling matrix
\[
{\bs S}_{\nu,\nu'}\isdef\big[\Kcal\big(a_\nu(\hat{\bs\xi}_{\bs\alpha}),
a_\nu'(\hat{\bs\xi}_{\bs\alpha}')\big)\big]_{
\bs\alpha,\bs\alpha'\in \Lambda_{{\bs\omega},q,d}}.
\]
Next, we introduce the transported polynomials 
\(p^\nu_{\bs\alpha}\isdef\hat{p}_{\bs\alpha}\circ a_{\nu}^{-1}\)
and the cluster bases 
\[{\bs P}_{\nu}
\isdef[p^\nu_{\bs\alpha}({\bs x})]_{{\bs x}
\in\nu,{\bs\alpha}\in \Lambda_{{\bs\omega},q,d}}.
\]
Then, in view of \eqref{eq:KernelApprox}, we can approximate the matrix block
\({\bs K}_{\nu,\nu'}
\isdef[\Kcal({\bs x},{\bs x}')]_{{\bs x}\in\nu,{\bs x}'\in\nu'}\)
associated to the block-cluster \(\nu\times\nu'\in\Fcal\) according to
\begin{equation}\label{eq:LowRankBlock}
{\bs K}_{\nu,\nu'}\approx
\widetilde{\bs K}_{\nu,\nu'}={\bs P}_\nu
{\bs C}_{\nu,\nu'}{\bs P}_{\nu'}^\intercal
= {\bs P}_\nu{\bs V}^{-1}{\bs S}_{\nu,\nu'}
{\bs V}^{-\intercal}{\bs P}_{\nu'}^\intercal.
\end{equation}

By fixing the index set \(\Lambda_{{\bs\omega},q,d}\) for all clusters, 
we can easily construct nested cluster bases, see \cite{Borm} 
for example. Thus, for the transfer matrices 
\[
{\bs T}_{\nu_\text{child}}
\isdef{\bs V}^{-1}\big[p^\nu_{{\bs\alpha}'}\big(a_{\nu_\text{child}}
(\hat{\bs\xi}_{\bs\alpha})\big)\big]_{{\bs\alpha},{\bs\alpha}'
\in \Lambda_{{\bs\omega},q,d}},
\]
there holds the two-scale relation
\[
{\bs P}_\nu=\begin{bmatrix}{\bs P}_{\nu_{\text{child}_1}}
{\bs T}_{\nu_{\text{child}_1}}\\{\bs P}_{\nu_{\text{child}_2}}
{\bs T}_{\nu_{\text{child}_2}}\end{bmatrix}.
\]

\subsection{Error analysis}
Based on the degenerate kernel approximation
\eqref{eq:KernelApprox} and Theorem~\ref{th:errorEstimate},
the error analysis of the dimension weighted fast multipole
method can now be performed along the lines of \cite{Borm}.
For the reader's convenience, we recall the error estimate below.
Given the set \(X=\{{\bs x}_1,\ldots,{\bs x}_N\}\subset\Rbb^d\),
we define the function space \(\Hcal_X\isdef
\operatorname{span}\{\varphi_1,\ldots,\varphi_N\}\),
where \(\varphi_i\isdef\kernel({\bs x}_i,\cdot)\in\Hcal\).
\begin{lemma}\label{lem:erradm} Let the assumptions of 
Corollary~\ref{cor:interpest}
be satisfied for \(\tau_k\in[0,c_k\eta\rho/b_k]\).
Then, for any \(\nu\times\nu'\in\Fcal\), there holds
\[
\big\|{\bs K}_{\nu,\nu'}-
\widetilde{\bs K}_{\nu,\nu'}\big\|_F\leq
2(1+L_{\boldsymbol{\omega},q})c_r2^d e^{-q(1-\frac 1 r\log\log d)}
\frac{c_{\kernel}}{1-\gamma}\sqrt{|\nu||\nu'|}.
\]
\end{lemma}

\begin{proof}
There holds by Corollary~\ref{cor:interpest} that
\begin{align*}
\big\|{\bs K}_{\nu,\nu'}-
\widetilde{\bs K}_{\nu,\nu'}\big\|_F^2
&=\sum_{{\bs x}\in\nu,{\bs x}'\in\nu'}
\big(\kernel({\bs x},{\bs x}')-\widetilde{\kernel}({\bs x},{\bs x}')\big)^2\\
&\leq\sum_{{\bs x}\in\nu,{\bs x}'\in\nu'}
\bigg(2(1+L_{\boldsymbol{\omega},q})c_r2^d e^{-q(1-\frac 1 r\log\log d)}
\frac{c_{\kernel}}{1-\gamma}\bigg)^2\\
&=\bigg(2(1+L_{\boldsymbol{\omega},q})c_r2^d e^{-q(1-\frac 1 r\log\log d)}
\frac{c_{\kernel}}{1-\gamma}\bigg)^2|\nu||\nu'|.
\end{align*}
Taking square roots on both sides yields the assertion.
\end{proof}

The following result is immediately obtained by summing up the interpolation
errors over all farfield blocks, taking into account that we have a panelization 
of the matrix into disjoint submatrices.

\begin{corollary}\label{th:Frobtot} Let the assumptions of Corollary~\ref{cor:interpest}
be satisfied for \(\tau_k\in[0,c_k\eta\rho/b_k]\).
Then, there holds
\[
\big\|{\bs K}-
\widetilde{\bs K}\big\|_F\lesssim
(1+L_{\boldsymbol{\omega},q})c_r2^d e^{-q(1-\frac 1 r\log\log d)}
\frac{c_{\kernel}}{1-\gamma}N.
\]
\end{corollary}

For quasi-uniform sets \(X\), we have the following 
refinement of Corollary~\ref{th:Frobtot}.

\begin{corollary}
Let the assumptions of Corollary~\ref{cor:interpest}
be satisfied for \(\tau_k\in[0,c_k\eta\rho/b_k]\). In case of 
quasi-uniform points, the matrix $\widetilde{\bs K}$ satisfies the 
following error estimate
\[
\frac{\big\|{\bs K}-
\widetilde{\bs K}\big\|_F}{\big\|
\bs K\big\|_F}\lesssim(1+L_{\boldsymbol{\omega},q})c_r
2^d e^{-q(1-\frac 1 r\log\log d)}
\frac{c_{\kernel}}{1-\gamma}.
\]
\end{corollary}
\begin{proof}
From Theorem~\ref{th:Frobtot} we know that, for
quasi-uniform points \(X=\{{\bs x}_1,\ldots,{\bs x}_N\}\subset\Rbb^d\) 
as in Definition~\ref{def:qunifpoints}, there holds
\[
\frac{1}{N^2}\big\|\bs K\big\|_F^2=\frac{1}{N^2}
\sum_{i=1}^{N}\sum_{j=1}^N\big|\kernel(\bs{x}_i,\bs {x}_j)\big|^2
\sim \int_{\Omega}\int_{\Omega}\big|\kernel(\bs{x},\bs{y})\big|^2\d 
\bs{x}\d \bs{y},
\]
and so \(\big\|\bs K\big\|_F \sim N\). Exploiting this, 
we obtain that the matrix \(\widetilde{\bs K}\) satisfies
\[
\frac{\big\|{\bs K}-
\widetilde{\bs K}\big\|_F}{\big\|
\bs K\big\|_F}\lesssim(1+L_{\boldsymbol{\omega},q})
c_r2^d e^{-q(1-\frac 1 r\log\log d)}\frac{c_{\kernel}}{1-\gamma},
\]
as claimed.
\end{proof}

\subsection{Approximate Fekete points}
We now introduce the concept of approximate Fekete points, see \cite{BDMSV10},
for the interpolation of the kernel matrix \(\bs K\). For this purpose, 
we recall the definition of Fekete points and the Lebesgue constant, where the 
latter is related to the Vandermonde matrix \(\bs V\)
from \eqref{eq:Vandermonde}.
Let \(\{\hat{\bs\xi}_{\bs\alpha}\}\subset[0,1]^d\) and \(\{\hat{p}_{\bs\alpha}
({\bs x})\}\), \({\bs\alpha}\in \Lambda_{{\bs\omega},q,d}\), be a set of 
interpolation nodes and linearly independent continuous functions,
respectively, such that the set of interpolation nodes is unisolvent 
in \(\spn\{\hat{p}_{\bs\alpha}:{\bs\alpha \in \Lambda_{{\bs\omega},q,d}}\}\). 
Note that, in our setting, \(\{\hat{p}_{\bs\alpha}\}\) are simply 
the tensorized Legendre polynomials. If the set of nodes 
\(\hat{\bs\xi}_{\bs\alpha}\) are chosen to maximize in 
\([0,1]^d\) the modulus of the determinant 
$\det({\bs V})$ of the Vandermonde matrix, then 
\(\hat{\bs\xi}_{\bs\alpha}\) are called Fekete points. 
Denoting by ${\bs V}_j({\bs x})$ the Vandermonde 
matrix ${\bs V}$ with the $j$-th column being replaced by the 
vector $\big[\hat{p}_{\bs\alpha'}(\bs x)\big]_{\bs\alpha'
\in \Lambda_{{\bs\omega},q,d}}$, then the Lagrange basis
\(\ell_j({\bs x})\isdef\det\big(\bs V_j({\bs x})\big)
/\det({\bs V})\),
\(j=,1\ldots,|\Lambda_{{\bs\omega},q,d}|\), obviously satisfies
\[
\|\ell_j\|_{\infty} \leq 1.
\]
In particular, the Lebesgue constant
\[
L_{\boldsymbol{\omega},q}= \max\limits_{\bs x \in [0,1]^d} 
\sum_{j=1}^{|L_{{\bs\omega},q,d}|} \big| \ell_j(\bs x)\big|
\]
is such that \(L_{\boldsymbol{\omega},q}\leq |\Lambda_{{\bs\omega},q,d}|\).

Fekete points are known to be ``good'' interpolation points. However, 
their computation becomes non-feasible very rapidly and as such they 
can be solved numerically in very special cases only. Therefore, a greedy 
algorithm has been proposed in \cite{BDME16, BDMSV11, BDMSV10} for the 
computation of approximate Fekete points, which are good approximations 
to the true ones. These are realized as a column-pivoted QR decomposition 
of the Vandermonde matrix based on a candidate points from the
Halton sequence, see \cite{Hal60}. For all the details
on approximate Fekete points, we refer the reader to \cite{BDME16} 
and \cite{BDMSV10}.

\section{Numerical experiments}\label{sec:numerix}
\subsection{Comparison between interpolation techniques}
We shall present a comparison between tensor product 
interpolation (TPI), total degree interpolation (TDI), 
and weighted total degree interpolation (WTDI). To this end, 
we compare the number of points used by the three methods in 
relation to the dimension of the domain under consideration 
and the number of polynomials. The way of defining the weights 
in case of WTDI follows Theorem~\ref{th:errorEstimate}, where
we have set $\tau_k = k^{2}$, $k\in\Nbb$.

It is well known that, given the dimension $d$ and the polynomial 
degree $q$, the number of points for TPI is
\[
|\{{\bs x}^{\bs\alpha}:\|\bs\alpha\|_\infty\leq q\}|=(q+1)^{d}
\]
while 
for TDI it is 
\[
|\Lambda_{{\bs 1},q,d}|=\binom{q+d}{d}.
\]
As we can see in the left and
middle plot of Figure~\ref{fig:comparison_number_points}, the 
difference in the number of points used by the different techniques 
is evident. For WTDI, we can observe in the right plot of 
Figure~\ref{fig:comparison_number_points} that the required 
number of points is much smaller and bounded by the worst-case
scenario which is obtained for dimension \(d=20\) and polynomial 
degree \(q=10\), where the number of points used is \(244\).

\begin{figure}[htb]
\begin{center}
\scalebox{0.64}{
\begin{tikzpicture}
\begin{axis}[zmode=log,
       zmax = 1e21,
       zmin = 1e0,
       title = Number of points for TPI,
y dir=reverse, 
grid,
ytick={0,5,10,15,20},
xtick={0,2,4,6,8,10},
y dir=reverse, 
y label style={inner sep=5pt, anchor=center },
xlabel=Pol. degree,
y label style={rotate=75},
grid=both,
grid style={line width=10pt, gray!10},
grid style={ultra thin, gray!50},
ylabel=Dimension,
x label style= {rotate=90},
  every axis x label/.append style={sloped, at={(rel axis cs: 0.5, -0.175, 0)}, below},
  every axis y label/.append style={sloped, at={(rel axis cs: -0.175, 0.5, 0)}, below}, 
colormap/Paired,
view={-75}{20},
height=6.85cm,width=6.85cm]
\addplot3[only marks,scatter] file {test_TPI_2.txt};
     \end{axis}
\end{tikzpicture}}
\scalebox{0.64}{
\begin{tikzpicture}
\begin{axis}[zmode=log,
       zmax = 1e21,
       zmin = 1e0,
       title = Number of points for TDI,
y dir=reverse, 
grid,
ytick={0,5,10,15,20},
xtick={0,2,4,6,8,10},
y dir=reverse, 
y label style={inner sep=5pt, anchor=center },
xlabel=Pol. degree,
y label style={rotate=75},
grid=both,
grid style={line width=10pt, gray!10},
grid style={ultra thin, gray!50},
ylabel=Dimension,
x label style= {rotate=90},
  every axis x label/.append style={sloped, at={(rel axis cs: 0.5, -0.175, 0)}, below},
  every axis y label/.append style={sloped, at={(rel axis cs: -0.175, 0.5, 0)}, below}, 
colormap/Paired,
view={-75}{20},
height=6.85cm,width=6.85cm]
\addplot3[only marks,scatter] file {test_TDI_2.txt};
\end{axis}
\end{tikzpicture}}
\scalebox{0.64}{
\begin{tikzpicture}
\begin{axis}[zmode=log,
       zmax = 1e21,
       zmin = 1e0,
       title = Number of points for WTDI,
y dir=reverse, 
grid,
ytick={0,5,10,15,20},
xtick={0,2,4,6,8,10},
y label style={inner sep=10pt, anchor=center },
xlabel=Pol. degree,
y label style={rotate=75},
grid=both,
grid style={line width=10pt, gray!10},
grid style={ultra thin, gray!50},
ylabel=Dimension,
x label style= {rotate=90},
  every axis x label/.append style={sloped, at={(rel axis cs: 0.5, -0.175, 0)}, below},
  every axis y label/.append style={sloped, at={(rel axis cs: -0.175, 0.5, 0)}, below}, 
colormap/Paired,
view={-75}{20},
height=6.85cm,width=6.85cm]
\addplot3[only marks,scatter] file {test_WTDI_2.txt};
\end{axis}
\end{tikzpicture}}
\caption{\label{fig:comparison_number_points}
Number of interpolation points required for TDI, TPI, 
and WTDI based on a quadratic increase of the sequence
 \(\{\tau_k\}\) of the convergence radii.}
\end{center}
\end{figure}
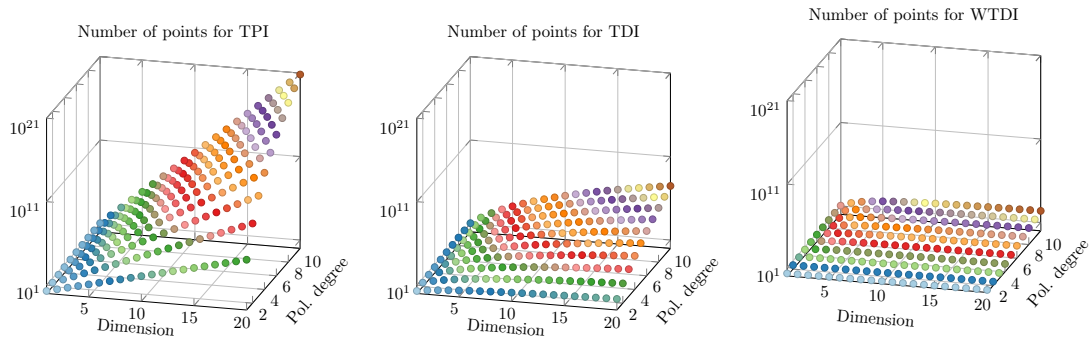

In the next experiment, we study how the number of points
changes when the sequence $\{\tau_k\}$ increases at higher rates. 
Namely, in Figure~\ref{fig:number_points_WTDI}, the numbers 
of interpolation points is depicted which are obtained for 
$\tau_k = k^r$ for $r=2,3,4$. As we can expect, the higher 
the value of $r$, the fewer points used as a consequence 
of the definition of the weight vector \(\boldsymbol{\omega}\) 
and the set \(\Lambda_{\boldsymbol{\omega},q,k}\). 

\begin{figure}[htb]
\pgfplotsset{width=7cm,compat=1.3} 
\centering
\begin{tikzpicture}
    \pgfplotsset{compat=1.3}
        \begin{axis}[zmode=log,
y dir=reverse, 
y label style={inner sep=5pt, anchor=center },
ylabel=Dimension,
y label style={rotate=75},
ztick={1e0,1e1,1e2,1e3},
grid=both,
ytick={0,5,10,15,20},
xtick={0,2,4,6,8,10},
zmax = 2.7e2,
grid style={line width=10pt, gray!10},
grid style={ultra thin, gray!50},
xlabel=Pol. degree,
 legend style = {
        legend cell align=left,
        mark options={scale=3},
        /tikz/every even column/.append style={column sep=1cm}
    },
x label style= {rotate=90},
  every axis x label/.append style={sloped, at={(rel axis cs: 0.5, -0.175, 0)}, below},
  every axis y label/.append style={sloped, at={(rel axis cs: -0.175, 0.5, 0)}, below}, 
legend style={font=\small,cells={anchor=east},
        legend pos=outer north east},
view={-55}{30},
height=6.85cm,width=6.85cm]
\addplot3[only marks,  mark = diamond,scatter, mark size=0.9pt, colormap/violet] file {test_WTDI_2.txt};
\addlegendentry{$r=2$}
\addplot3[only marks, mark size=0.9pt, mark = square,scatter, colormap/cool] file {test_WTDI_3.txt};
\addlegendentry{$r=3$}
\addplot3[only marks, mark size=0.9pt,mark = star,scatter, colormap/greenyellow] file {test_WTDI_4.txt};
\addlegendentry{$r=4$}
\end{axis};
\end{tikzpicture}
\caption{\label{fig:number_points_WTDI}
Comparison of the number of points used by WTDI based on 
the different rates of increase $\tau_k = k^{r}$ for \(r=2,3,4\).}
\end{figure}
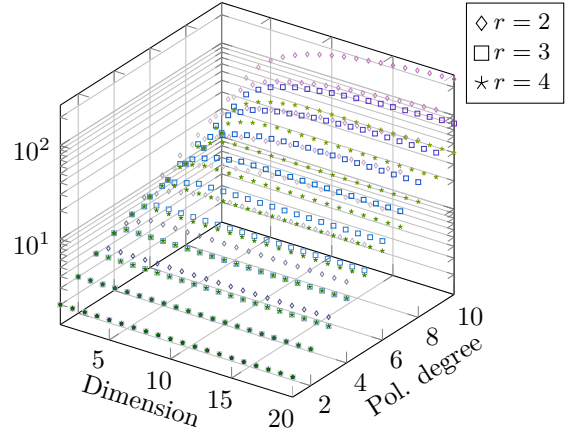

\subsection{Accuracy of the kernel interpolation}
To benchmark the accuracy of the kernel interpolation,
we consider \(N=100\,000\) uniformly distributed random samples 
$\{{\bs x}_1,\ldots{\bs x}_N\}$ in the hypercube \([0,1]^{20}\),
which is rescaled as in Section~\ref{sec:problem} by 
\(b_k=k^{-r}\) for $r=2,3,4$. We 
intend to evaluate the performance of the kernel interpolation by 
employing a validation approach that involves the splitting of the data 
into distinct training and test sets. The training dataset comprises the 
aforementioned \(N\) uniformly distributed random samples, utilized 
for solving the system \eqref{eq:LGS} of linear equations, while 
the test set consists of \(10\,000\) points. 

To this end, we choose the exponential kernel $\kernel(\bs{x},\bs{y})
= \exp(\|\bs{x}-\bs{y}\|_2/\sigma)$, where we consider \(15\) different values 
of the length scale parameter $\sigma$. The values are derived 
from a logarithmically equispaced grid within the interval 
$\big[\max\{10^{-5}, q_{X}\},\operatorname{diam}(B_X)\big]$. 
Here, $\operatorname{diam}(B_X)$ is the diameter of the root 
cluster $B_X$ of the cluster tree and $q_X$ is the separation 
radius from Definition~\ref{def:qunifpoints}. 
The right-hand side of \eqref{eq:LGS} is calculated as 
\[ 
  y_i\isdef \frac{\operatorname{sin}(4 \pi \| \bs x_i \|_2)}
    {8\pi \|\bs x_i \|_2},
\]
where \(\bs x_i \in X\). For the ridge parameters \(\lambda\)
considered for regularizing the underlying system \eqref{eq:LGS} 
of linear equations, we consider \(15\) different values, chosen 
logarithmically equispaced grid from the interval 
$\big[10^{-6}, 10^{-1}\big]$. For the multipole method, we use 
\(\eta=0.5\) for the construction of the cluster tree. In order 
to define the weights $w_k$ for the polynomial interpolation, 
we set $\tau_k^{-1} = \eta b_k$ in each particular case of $r$ 
and define \(\omega_k = \log \rho_k \), where 
\[ 
\rho_k= e \cdot \frac{2\tau_k + \sqrt{1+4\tau_k^2}}
    {2\tau_1+\sqrt{1+4\tau_1^2}}. 
\]
This choice ensures the normalization of the weights such that $\omega_1 = 1$.
Moreover, we set the maximum degree to \(q=8\) 
for the weighted total degree index set \(\Lambda_{\boldsymbol{\omega},q,d}\). 
The system \eqref{eq:LGS} of linear equations is solved by 
using the conjugate gradient method with a relative error 
less than \(10^{-6}\).

For each value of \(r\in\{2,3,4\}\), we report the average
prediction error (PE) and the compression error (CE). The 
PE is the relative error, measured in the Euclidean norm,
of the predicted values and the true values at \(1\,000\)
randomly chosen points in \(B_X\) that are not contained 
in \(X\). The compression error is quantified by the 
relative error between \(100\) randomly selected columns 
of \({\bs K}\) and the corresponding columns of 
\(\widetilde{\bs K}\), measured in the Frobenius norm. 
To obtain a second order statistics for PE and CE,
both errors are computed five times. For PE, we only
report the corresponding average, while we also show
the standard deviation in case of CE. The results for 
\(r=2,3,4\) are shown in Figures \ref{fig:20_2_test}, 
\ref{fig:20_3_test}, and \ref{fig:20_4_test}, respectively. 

\begin{figure}[htb]
\begin{center}
\pgfplotsset{width=7cm,compat=1.3}
\begin{tikzpicture}
    \pgfplotsset{small}
    \matrix {
\begin{axis}[zmode=log,
xmode = log,
ymode = log,
       zmax = 1,
       zmin = 1e-3,
grid = both,
ticklabel style = {font=\tiny},
xtick={1e-3,1e-2,1e-1, 1},
y label style={ anchor=center },
xlabel=Length scale,
y label style={rotate=75},
grid style={ultra thin, gray!20},
ylabel=Ridge parameter,
x label style= {rotate=90},
  every axis x label/.append style={sloped, at={(rel axis cs: 0.5, -0.250, 0)}, below},
  every axis y label/.append style={sloped, at={(rel axis cs: 1.250, 0.5, 0)}, below}, 
colormap/Paired,
shader=interp,
y dir = reverse,
view={45}{30},
mesh/cols=15,
mesh/ordering=y varies,
height=6.85cm,width=6.85cm]
 \addplot3+[mesh, scatter] file{20_2_APE.txt};
`\end{axis}
& 
\begin{axis}[
xmode = log,
ymode = log,
ymin = 1e-17,
grid,
xtick={1e-4,1e-3,1e-2,1e-1, 1},
y label style={ anchor=center },
xlabel=Length scale,
y label style={rotate=75},
grid=both,
grid style={ultra thin, gray!20},
height=6.85cm,width=6.85cm]
 \addplot[blue,mark size=1.25pt, error bars/.cd, y dir=both,y explicit, error bar style={red}]
coordinates {

   (7.7203767e-03,1.5593271e-09) +- (0,9.6947609e-11)
   (1.0429075e-02,2.0113814e-07) +- (0,1.0046240e-08)
   (1.4088122e-02,5.5834200e-06) +- (0,1.8263452e-07)
   (1.9030947e-02,5.1738000e-05) +- (0,2.0025457e-06)
   (2.5707965e-02,2.4447743e-04) +- (0, 7.0773429e-06)
   (3.4727619e-02,7.1811166e-04) +- (0,3.9343478e-05)
   (4.6911823e-02,1.4450585e-03) +- (0,5.5312542e-05)
   (6.3370862e-02,2.1626519e-03) +- (0,4.8804767e-05)
   (8.5604564e-02,2.6918346e-03) +- (0,9.6068232e-05)
   (1.1563897e-01,2.7497235e-03) +- (0,8.3154115e-05)
   (1.5621097e-01,2.5289327e-03) +- (0,7.8766469e-05)
   (2.1101767e-01,2.2680772e-03) +- (0,1.4854392e-04)
   (2.8505334e-01,1.7990273e-03) +- (0,5.3940832e-05)
   (3.8506446e-01,1.4447849e-03) +- (0,4.1024760e-05)
   (5.2016454e-01,1.0838726e-03) +- (0,3.5577749e-05)
   
};
 \end{axis}
\\ };
\end{tikzpicture}
 \caption{\label{fig:20_2_test}
 Average prediction error (left) and average compression
 error (right) for the benchmark study in case of $r=2$.}
\end{center}
\end{figure}
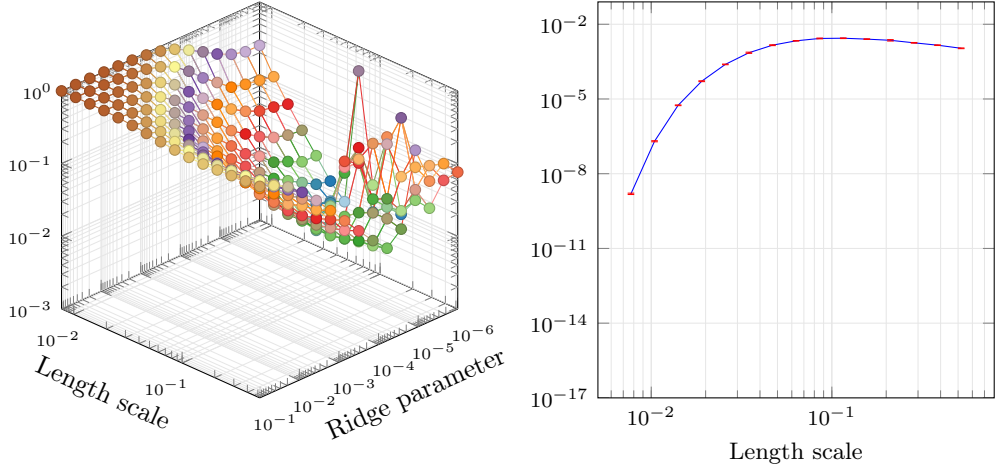

\begin{figure}[htb]
\begin{center}
\pgfplotsset{width=7cm,compat=1.3}
\begin{tikzpicture}
    \pgfplotsset{small}
    \matrix {
\begin{axis}[zmode=log,
xmode = log,
ymode = log,
       zmax = 1,
       zmin = 1e-3,
grid = both,
ticklabel style = {font=\tiny},
xtick={1e-3,1e-2,1e-1, 1},
y label style={ anchor=center },
xlabel=Length scale,
y label style={rotate=75},
grid style={ultra thin, gray!20},
ylabel=Ridge parameter,
x label style= {rotate=90},
  every axis x label/.append style={sloped, at={(rel axis cs: 0.5, -0.250, 0)}, below},
  every axis y label/.append style={sloped, at={(rel axis cs: 1.250, 0.5, 0)}, below}, 
colormap/Paired,
shader=interp,
y dir = reverse,
view={45}{30},
mesh/cols=15,
mesh/ordering=y varies,
height=6.85cm,width=6.85cm]
 \addplot3+[mesh, scatter] file{20_3_APE.txt};
\end{axis}
& 
\begin{axis}[
xmode = log,
ymode = log,
ymin = 1e-17,
grid,
xtick={1e-4,1e-3,1e-2,1e-1, 1},
y label style={ anchor=center },
xlabel=Length scale,
y label style={rotate=75},
grid=both,
grid style={ultra thin, gray!20},
height=6.85cm,width=6.85cm]
 \addplot[blue,mark size=1.25pt, error bars/.cd, y dir=both,y explicit, error bar style={red}]
coordinates {

   (1.2867724e-03,   1.8730648e-14) +- (0,3.7205385e-15)
   (1.9711989e-03,   7.9884791e-10) +- (0,1.0031807e-10)
   (3.0196677e-03,   7.6721484e-07) +- (0,3.9415700e-08)
   (4.6258108e-03,   5.5011740e-05) +- (0,2.3213683e-06)
   (7.0862518e-03,   6.3687102e-04) +- (0,3.3157338e-05)
   (1.0855387e-02,   2.3584685e-03) +- (0,6.3349271e-05)
   (1.6629301e-02,   4.8145409e-03) +- (0,7.1090907e-05)
   (2.5474327e-02,   7.0903311e-03) +- (0,2.3051539e-04)
   (3.9023968e-02,   8.2358550e-03) +- (0,2.0640436e-04)
   (5.9780583e-02,   8.0561111e-03) +- (0,3.1503882e-04)
   (9.1577515e-02,   7.0332478e-03) +- (0,2.3548773e-04)
   (1.4028705e-01,   5.3429296e-03) +- (0,1.5615353e-04)
   (2.1490488e-01,   3.8530075e-03) +- (0,4.3128082e-05)
   (3.2921150e-01,   2.5513336e-03) +- (0,7.2639838e-05)
   (5.0431711e-01,   1.6734688e-03) +- (0,2.2064662e-05)
};
 \end{axis}
\\ };
\end{tikzpicture}
 \caption{\label{fig:20_3_test}
 Average prediction error (left) and average compression
 error (right) for the benchmark study in case of $r=3$.}
\end{center}
\end{figure}

\begin{figure}[htb]
\begin{center}
\pgfplotsset{width=7cm,compat=1.3}
\begin{tikzpicture}
    \pgfplotsset{small}
    \matrix {
\begin{axis}[zmode=log,
xmode = log,
ymode = log,
       zmax = 1,
       zmin = 1e-3,
grid = both,
ticklabel style = {font=\tiny},
xtick={1e-3,1e-2,1e-1, 1},
y label style={ anchor=center },
xlabel=Length scale,
y label style={rotate=75},
grid style={ultra thin, gray!20},
ylabel=Ridge parameter,
x label style= {rotate=90},
  every axis x label/.append style={sloped, at={(rel axis cs: 0.5, -0.250, 0)}, below},
  every axis y label/.append style={sloped, at={(rel axis cs: 1.250, 0.5, 0)}, below}, 
colormap/Paired,
shader=interp,
y dir = reverse,
view={45}{30},
mesh/cols=15,
mesh/ordering=y varies,
height=6.85cm,width=6.85cm]
 \addplot3+[mesh, scatter] file{20_4_APE.txt};
\end{axis}
& 
\begin{axis}[
xmode = log,
ymode = log,
ymin = 1e-17,
grid,
xtick={1e-4,1e-3,1e-2,1e-1, 1},
y label style={ anchor=center },
xlabel=Length scale,
y label style={rotate=75},
grid=both,
grid style={ultra thin, gray!20},
height=6.85cm,width=6.85cm]
 \addplot[blue,mark size=1.25pt, error bars/.cd, y dir=both,y explicit, error bar style={red}]
coordinates { 

   (2.3662863e-04,   2.2204460e-16) +- (0,0.0000000e+00)
   (4.0890597e-04,  5.0209346e-16) +- (0,1.2686991e-16)
   (7.0660974e-04,   6.2400326e-10) +- (0,1.8935204e-10)
   (1.2210566e-03,   2.3512270e-06) +- (0,2.8076482e-07)
   (2.1100461e-03,   2.3732192e-04) +- (0,9.5427470e-06)
   (3.6462640e-03,   2.3342892e-03) +- (0,9.6456638e-05)
   (6.3009245e-03,   5.7848173e-03) +- (0,1.7489419e-04)
   (1.0888309e-02,   8.3022076e-03) +- (0,2.0008730e-04)
   (1.8815536e-02,   8.6900677e-03) +- (0,3.0870819e-04)
   (3.2514177e-02,   7.0478058e-03) +- (0,2.2092410e-04)
   (5.6186106e-02,   4.9619731e-03) +- (0,6.6517495e-05)
   (9.7092369e-02,   3.2530630e-03) +- (0,4.2815937e-05)
   (1.6778041e-01,   1.8934488e-03) +- (0,2.7416321e-05)
   (2.8993285e-01,   1.0187821e-03) +- (0,2.4449106e-05)
   (5.0101830e-01,   5.3138075e-04) +- (0,1.1991839e-05)

};
 \end{axis}
\\ };
\end{tikzpicture}
 \caption{\label{fig:20_4_test}
 Average prediction error (left) and average compression
 error (right) for the benchmark study in case of $r=4$.}
\end{center}
\end{figure}

The left-hand side of each figure shows the average 
PE, while the right-hand-side shows the average CE.
The \(x\)-axis in the PE plot corresponds to the
length scale parameter \(\sigma\), while the \(y\)-axis
corresponds to the ridge parameter. In the CE plots, the 
\(x\)-axis corresponds to the length scale parameter 
\(\sigma\) and the bars indicate the standard deviation.
The best observed parameter combination for \(r=2\)
is $\sigma=4.69\cdot{10^{-2}}$ and $\lambda=10^{-6}$, 
resulting in an error of \(5.98\cdot 10^{-3}\pm 
3.94\cdot 10^{-4}\). For \(r=3\), we obtain $\sigma=3.29
\cdot{10^{-1}}$ and $\lambda=6.11\cdot 10^{-5}$, which 
amounts to an error of \(6.31\cdot 10^{-3}\pm 3.24\cdot 
10^{-4}\), while \(r=4\) yields $\sigma=5.01\cdot{10^{-1}}$
and $\lambda=2.68\cdot 10^{-5}$ with an error of 
\(2.46\cdot 10^{-3}\pm 1.38\cdot 10^{-4}\).

The compression error shows a similar behavior in all three
cases. It is very small for low values of the length scale
parameter. This is explained by the fact that the kernel
matrix gets close to a diagonal matrix in this case, resulting
in a farfield that is basically zero.
Then the compression error monotonically increases up to a
a maximum and then monotonically decreases. This effect
is explained by the fact that at some point the kernel
gets very smooth apart from the diagonal and is well
approximated by the polynomials on the farfield.

\subsection{Shape uncertainty quantification}
As a relevant application for kernel interpolation, we consider 
the shape uncertainty quantification model introduced in 
\cite{HPS16}. Concretely, we consider 
the stationary diffusion problem
\begin{equation*}
  -\Delta u(\omega) = f\ \text{in $D(\omega)$}, \quad
  	u(\omega) = 0\ \text{on $\partial D(\omega), \quad \omega\in\Omega$}
\end{equation*}
for an uncertain domain \(D(\omega)\subset\mathbb{R}^n\). To 
model the shape uncertainty, we introduce a random deformation 
field \(\bs\chi\colon D_0\times\Omega\to\Rbb^n\) with
\begin{equation*}
\|{\bs\chi}(\omega)\|_{C^1(\overline{D_0};\mathbb{R}^n)},
\|{\bs\chi}^{-1}(\omega)\|_{C^1(\overline{D(\omega)};\mathbb{R}^n)}
\lesssim 1
\quad\text{for \(\mathbb{P}\)-a.e.\ }\omega\in\Omega,
\end{equation*}
which satisfies
\[
D(\omega)=\bs\chi(D_0,\omega).
\]
The deformation field is represented by
the Karhunen-Lo\`eve expansion
\[
{\bs\chi}(\omega)=\operatorname{Id}+
\sum_{k=1}^M\sigma_k{\bs\chi}_k{Y}_k(\omega),
\quad\omega\in\Omega,
\]
where we assume that the random variables \(\{Y_k\}_k\) are 
independent and identically uniformly distributed on \([-1,1]\). 
Moreover, the vector fields \({\bs\chi}_k\) amount to the eigenfunctions
of the covariance kernel associated to \({\bs\chi}\) and \(\sigma_k\) 
the corresponding singular values. For all the details, we 
refer to \cite{HPS16}.

By parametrizing the random variables by their image, we obtain 
the parametric deformation field
\begin{equation}\label{eq:pKL}
{\bs\chi}({\bs\zeta})=\operatorname{Id}+
\sum_{k=1}^M\sigma_k{\bs\chi}_k\zeta_k,
\quad
{\bs\zeta}\in[-1,1]^M,
\end{equation}
which in turn yields the parametric diffusion problem
\begin{equation}\label{SPDE}
  -\Delta u({\bs\zeta}) = f\ \text{in $D({\bs\zeta})$}, \quad
  	u({\bs\zeta}) = 0\ \text{on $\partial D({\bs\zeta}), 
    \quad {\bs\zeta}\in[-1,1]^M$}.
\end{equation}
As quantity of interest, we consider the 
linear output functional
\begin{equation}\label{QoI}
F(u)({\bs\zeta})=\big(u\circ{\bs\chi}^{-1}
({\bs\zeta}),\phi\big)_{L^2(D_0)}
\end{equation}
with a square integrable function \(\phi\colon D_0\to\Rbb\).

It is well-known that the parametric regularity of the
solution \(u\) of \eqref{SPDE} depends,
amongst others, on the spatial regularity of \(f\).
Namely, it has been shown in \cite{HPS16} that the decay in 
the sequence $\{\sigma_k\}$ of the singular values of the series expansion \eqref{eq:pKL}
propagates through the diffusion problem: The solution's derivatives 
satisfy the estimate
\[
\big\|\partial_{\bs\zeta}^{\bs\alpha} u({\bs\zeta})\big\|_{H^1(D_0)} 
    \lesssim |\bs\alpha|!\,\bs\gamma^{\bs\alpha}
\]
with the weights $\gamma_k\lesssim\sigma_k 
\|{\bs\chi}_k\|_{W^{1,\infty}(D_0)}$, $k\in\mathbb{N}$, 
provided that the right-hand side $f$ is smooth. Hence, 
as the quantity of interest is linear in $u$, the weight 
$\gamma_k$ with respect to the $k$-th input parameter 
is retained. Since, however, collocation approaches 
for the approximation of \(F(u)({\bs\zeta})\) are not 
efficient for non-smooth source terms \(f\), due to 
the lack of smoothness, we consider here the fast 
anisotropic kernel method presented in this article.

\begin{figure}[htb]
\includegraphics[scale=0.2,clip, trim= 230 60 80 40]{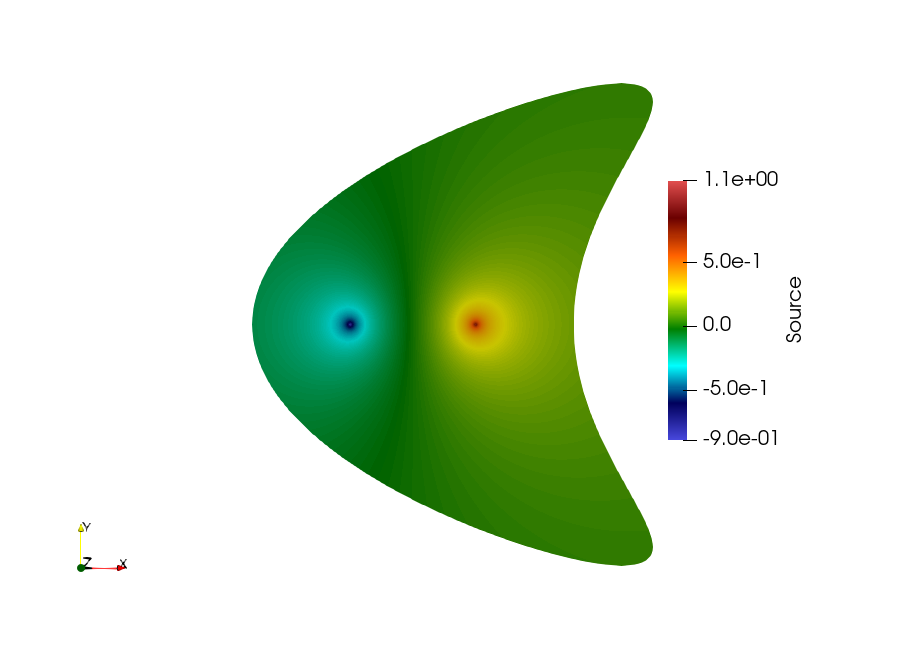}
\qquad\qquad
\includegraphics[scale=0.2,clip, trim= 230 60 80 40]{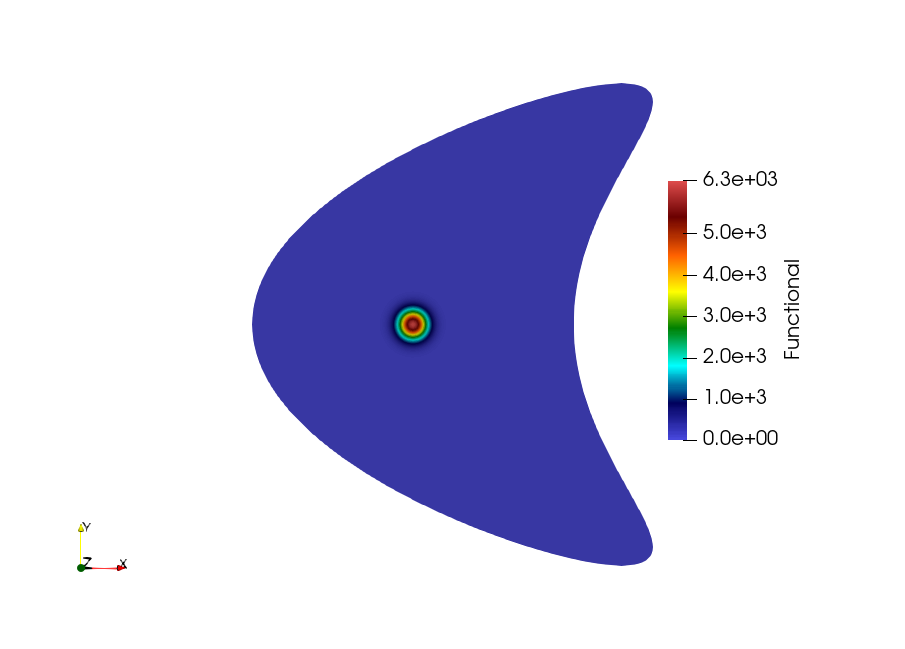}
\caption{\label{fig:data}Visualization of the source term (left) and the functional
\(\phi\) (right).}
\end{figure}

Exemplarily, we consider the source term
\[
f({\bs x})=\frac{1}{2\pi}\big(\log\|{\bs x}+{\bs x}_0\|_2
-\log\|{\bs x}-{\bs x}_0\|_2\big),\quad{\bs x}\in\Rbb^2,
\]
where \({\bs x}_0=[0.1,0]^\intercal\in D_0\). The particular quantity 
of interest \eqref{QoI} involves the localized Gaussian
\[
\phi({\bs x})=\frac{5\pi}{10^4}
e^{-\frac{5}{10^4}\|{\bs x}\|_2^2}.
\]
The reference shape \(D_0\) is given by the kite domain shown in 
Figure~\ref{fig:data} with bounding box \([-0.26,0.39]\times[-0.39,0.39]\).
It is discretized by \(327\,680\) piecewise linear parametric finite 
elements. A visualization of the source term and the functional can 
also been found in the left and right plot of Figure~\ref{fig:data},
respectively.

Finally, in order to define the deformation field, we compute 
the first 130 eigenfunctions $\{\bs \chi_k\}$ of the matrix-valued 
covariance kernel
\[
\Ccal\colon D_0\times D_0\to\Rbb^{2\times 2},\quad
\Ccal({\bs x},{\bs y})\isdef\begin{bmatrix}
10^2e^{-5\|{\bs x}-{\bs y}\|_2^2} & e^{-\|{\bs x}-{\bs y}\|_2^2}\\
e^{-\|{\bs x}-{\bs y}\|_2^2} & 10^2e^{-5\|{\bs x}-{\bs y}\|_2^2},
\end{bmatrix}
\]
discretized on the aforementioned finite element mesh. 
These eigenfunctions are weighted with the singular values
\(\sigma_k\isdef 0.25k^{-3}\) to complement
the random deformation field in accordance with \eqref{eq:pKL}.
\begin{figure}[htb]
\includegraphics[scale=0.18,clip,trim= 200 30 180 30]{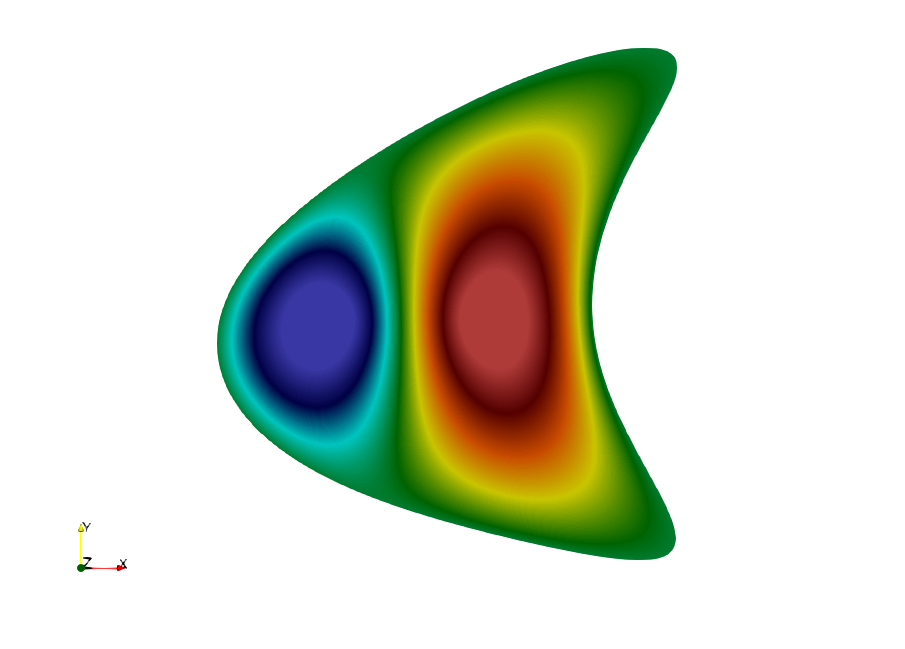}
\includegraphics[scale=0.18,clip,trim= 200 30 180 30]{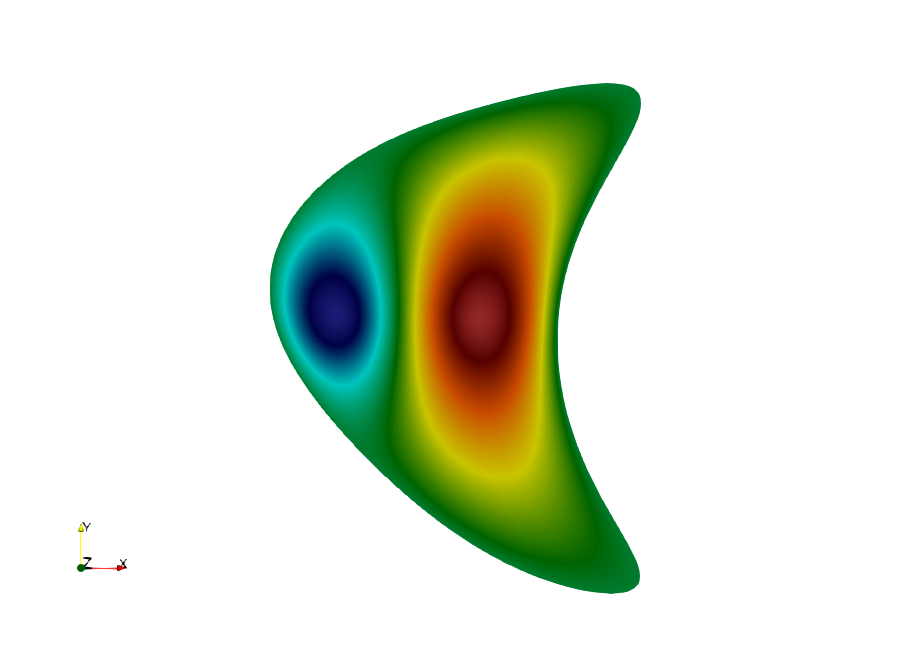}
\includegraphics[scale=0.18,clip,trim= 200 30 180 30]{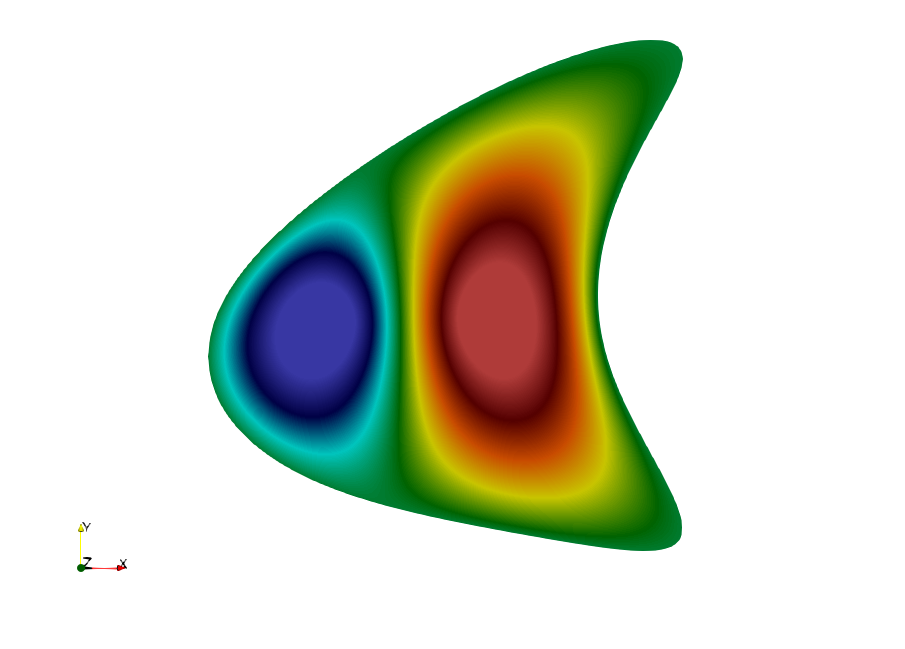}
\includegraphics[scale=0.18,clip,trim= 200 30 180 30]{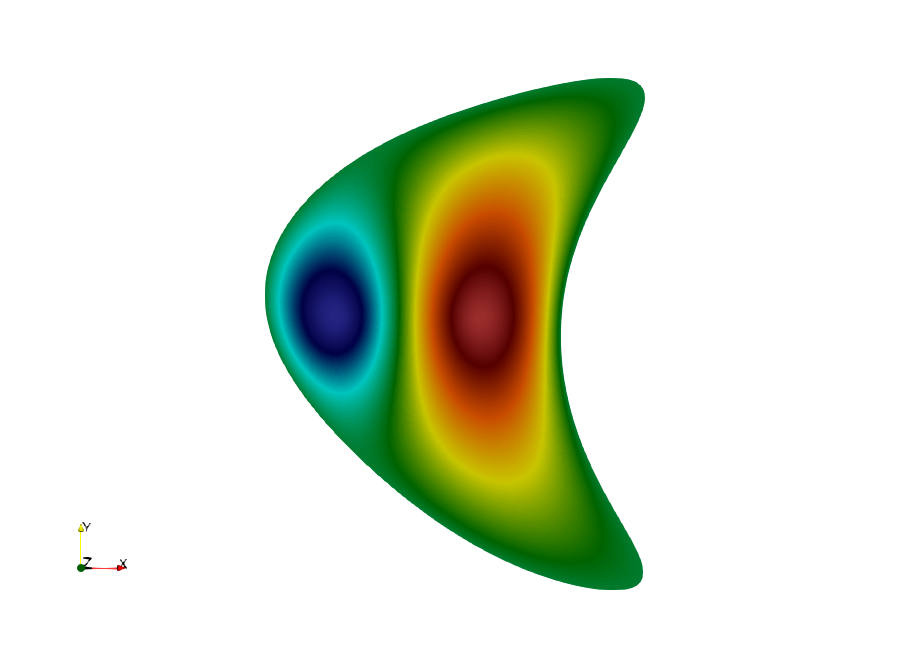}
\caption{\label{fig:reals}Different realizations of the parametric domain
with corresponding solutions.}
\end{figure}
Four particular realizations of the resulting 
random domain are depicted in Figure~\ref{fig:reals}, where color-coding visualizes the corresponding solutions to \eqref{SPDE}. The expectation and the 
standard deviation approximated by the Monte Carlo method can be found in Figure~\ref{fig:meanstd}.

\begin{figure}[htb]
\includegraphics[scale=0.2,clip, trim= 230 60 80 40]{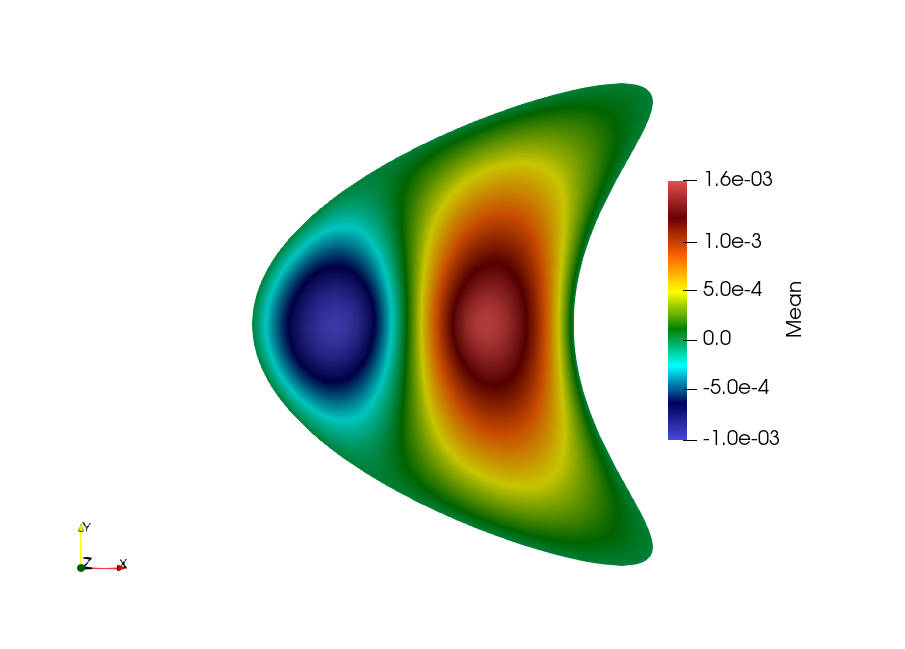}
\qquad\qquad
\includegraphics[scale=0.2,clip, trim= 230 60 80 40]{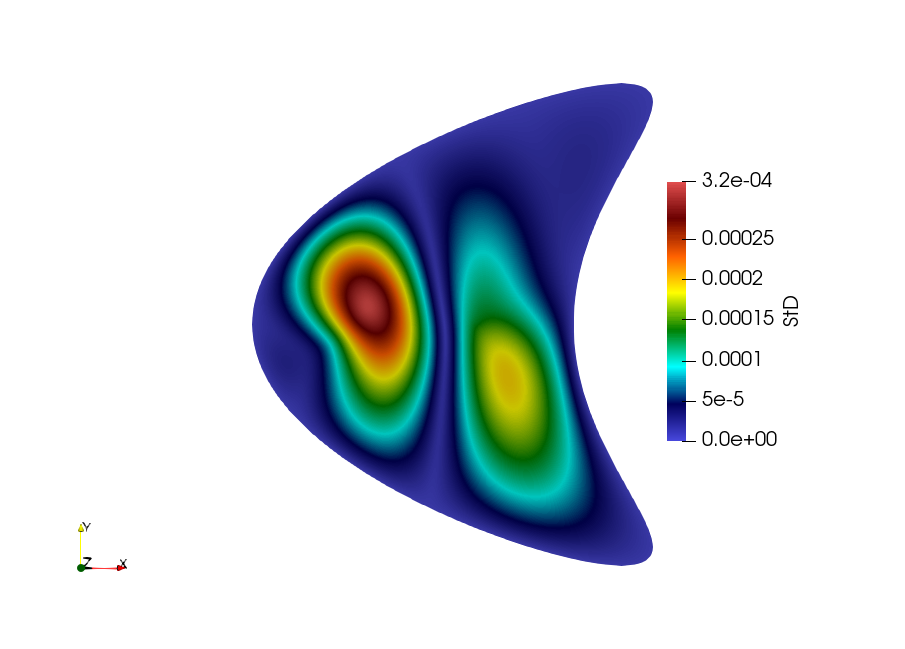}
\caption{\label{fig:meanstd}Expectation (left) and standard deviation (right)
of the
parametric diffusion problem approximated by the Monte Carlo method.}
\end{figure}

We consider $N=849\,375$ data points for the kernel interpolation, 
which are obtained by rescaling
each component of
the Monte Carlo samples \({\bs\zeta}_1,\ldots,{\bs\zeta}_N\) by 
the respective singular value, i.e., \({x}_{i,k}=\sigma_k\zeta_{i,k}\),
resulting in points 
\({\bs x}_1,\ldots{\bs x}_N\in\bigtimes_{k=1}^M[-\sigma_k,\sigma_k]\).
These points are then shifted 
by the choice \(b_k=2\sigma_k\) for all $k=1,\ldots,N$ such that \(X\in\Bcal\), 
see \eqref{eq:anisotropicBox}. 
Moreover, we perform a dimension truncation with
a relative error of \(10^{-3}\), resulting in \(d=20\) dimensions.
As in the benchmarks in the previous paragraph, we apply the 
exponential kernel $\kernel(\bs{x},\bs{y}) = \exp(\|\bs{x}-\bs{y}\|_2/\sigma)$, where
we choose \(10\) different values of the length scale parameter $\sigma$ 
from a logarithmically equispaced 
grid within the interval 
$\big[\max\{10^{-5}, q_{X}\},\operatorname{diam}(B_X)\big]$. 
For the ridge parameters \(\lambda\), we consider \(10\) different 
values, logarithmically equispaced 
within the interval 
$\big[10^{-6}, 10^{-1}\big]$. The 
construction of the cluster tree in the
multipole method is based on the admissibility 
condition \eqref{eq:admissibility} with
\(\eta=0.5\) while the maximum polynomial 
degree for the construction of 
\(\Lambda_{\boldsymbol{\omega},q,d}\) is
\(q=5\). The dimension weights are
computed as in the previous section.

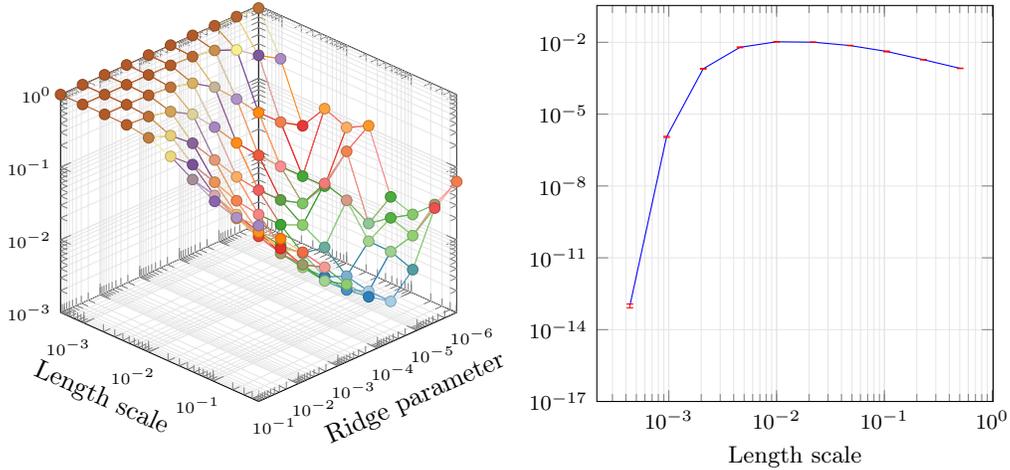
\begin{figure}[htb]
\begin{center}
\pgfplotsset{width=7cm,compat=1.3}
\begin{tikzpicture}
    \pgfplotsset{small}
    \matrix {
\begin{axis}[zmode=log,
xmode = log,
ymode = log,
       zmax = 1,
       zmin = 1e-3,
grid = both,
ticklabel style = {font=\tiny},
y label style={ anchor=center },
xlabel=Length scale,
y label style={rotate=75},
grid style={ultra thin, gray!20},
ylabel=Ridge parameter,
x label style= {rotate=90},
  every axis x label/.append style={sloped, at={(rel axis cs: 0.5, -0.250, 0)}, below},
  every axis y label/.append style={sloped, at={(rel axis cs: 1.250, 0.5, 0)}, below}, 
colormap/Paired,
shader=interp,
y dir = reverse,
view={45}{30},
mesh/cols=10,
mesh/ordering=y varies,
height=6.85cm,width=6.85cm]
 \addplot3+[mesh, scatter] file{UQ_APE.txt};
\end{axis}
& 
\begin{axis}[
xmode = log,
ymode = log,
ymin = 1e-17,
grid,
xtick={1e-4,1e-3,1e-2,1e-1, 1},
y label style={ anchor=center },
xlabel=Length scale,
y label style={rotate=75},
grid=both,
grid style={ultra thin, gray!20},
height=6.85cm,width=6.85cm]
 \addplot[blue,mark size=1.25pt, error bars/.cd, y dir=both,y explicit, error bar style={red}]
coordinates { 
   (4.3541014e-04,9.9706507e-14) +- (0,1.7912980e-14)
   (9.5350291e-04,1.1217673e-06) +- (0,6.0046173e-08)
   (2.0880722e-03,7.8517651e-04) +- (0,2.8099672e-05) 
   (4.5726609e-03,6.1734033e-03) +- (0,2.4829796e-04) 
   (1.0013652e-02,1.0516872e-02) +- (0,2.9263639e-04) 
   (2.1928856e-02,1.0107057e-02) +- (0,1.2002201e-04) 
   (4.8021914e-02,7.3329249e-03) +- (0,1.4544171e-04) 
   (1.0516300e-01,4.1494009e-03) +- (0,1.7357668e-04) 
   (2.3029602e-01,1.8789771e-03) +- (0,3.9762267e-05) 
   (5.0432434e-01,8.1678026e-04) +- (0,1.9089899e-05) 
};
 \end{axis}
\\ };
\end{tikzpicture}
 \caption{\label{fig:UQ_test}
Average prediction error (left) and compression error 
in blue with its standard variation in red (right).}
\end{center}
\end{figure}

The results of the computations are found in 
Figure~\ref{fig:UQ_test}. From the left plot, one figures 
out that the best observed parameter combination is 
$\sigma=5.04\cdot 10^{-1}$ and $\lambda=4.64\cdot 10^{-5}$, 
resulting in an error of \(3.65\cdot 10^{-3}\pm 1.33\cdot 10^{-4}\). 
As in the previous tests, we note from the right-plot of 
Figure~\ref{fig:UQ_test} that the compression error is
noticeably small for small values of the length scale parameter, 
with monotonic growth up to the peak $1.05\cdot 10^{-2}$ for 
\(\lambda=1.00\cdot 10^{-2}\), and then monotonically decreasing.

\section{Conclusion}\label{sec:conclusio}
In the present article, we have proposed a dimension weighted
fast multipole method in the context of scattered data 
approximation. This multipole 
method employs weighted total degree polynomial interpolation 
in the farfield of the kernel function under consideration. 
A rigorous error analysis is provided.
As shown by the extensive numerical
tests, the method is feasible and accurate, while 
enabling to deal with much more dimensions than the 
fast multiple method based on tensor product or
total degree interpolation.

\bibliographystyle{plain}
\bibliography{literature}

\begin{thebibliography}{10}

\bibitem{AFGHO16}
S.~Ambikasaran, D.~Foreman-Mackey, L.~Greengard, D.W. Hogg, and M.~O’Neil.
\newblock Fast direct methods for {G}aussian processes.
\newblock {\em IEEE Trans. Pattern Anal.}, 38(2):252--265, 2015.

\bibitem{BDME16}
C.~Bittante, S.~De~Marchi, and G.~Elefante.
\newblock A new quasi-{M}onte {C}arlo technique based on nonnegative least
  squares and approximate {F}ekete points.
\newblock {\em Numer. Math. Theor. Meth. Appl.}, 9(4):640--663, 2016.

\bibitem{Borm}
S.~B{\"o}rm.
\newblock {\em Efficient Numerical Methods for Non-Local Operators.
  {{\(\mathcal H^2\)}}-Matrix Compression, Algorithms and Analysis}, volume~14
  of {\em EMS Tracts Math.}
\newblock European Mathematical Society (EMS), Z{\"u}rich, 2010.

\bibitem{BG07}
S.~B{\"o}rm and J.~Garcke.
\newblock Approximating {G}aussian processes with $\mathcal{H}^2$-matrices.
\newblock In J.N. Kok, J.~Koronacki, R.~Lopez de~Mantaras, S.~Matwin,
  D.~Mladen, and A.~Skowron, editors, {\em Proceedings of 18th European
  Conference on Machine Learning, Warsaw, Poland, September 17-21, 2007. ECML
  2007}, volume 4701, pages 42--53, 2007.

\bibitem{BDMSV11}
L.~Bos, S.~De~Marchi, A.~Sommariva, and M.~Vianello.
\newblock Weakly admissible meshes and discrete extremal sets.
\newblock {\em Numer. Math. Theor. Meth. Appl.}, 4(1):1--12, 2011.

\bibitem{BDMSV10}
L.~Bos, S.~De Marchi, A.~Sommariva, and M.~Vianello.
\newblock Computing multivariate {F}ekete and {L}eja points by numerical linear
  algebra.
\newblock {\em SIAM J. Numer. Anal.}, 48(5):1984--1999, 2010.

\bibitem{CHCX}
D.~Cai, H.~Huang, E.~Chow, and Y.~Xi.
\newblock Data-driven construction of hierarchical matrices with nested bases.
\newblock {\em SIAM J. Sci. Comput.}, 2024.
\newblock To appear.

\bibitem{C2S15}
A.~Chkifa, A.~Cohen, and C.~Schwab.
\newblock Breaking the curse of dimensionality in sparse polynomial
  approximation of parametric {PDEs}.
\newblock {\em J. Math. Pures Appl.}, 103:400--428, 2015.

\bibitem{CD15}
A.~Cohen and R.~DeVore.
\newblock Approximation of high-dimensional parametric {PDE}s.
\newblock {\em Acta Numer.}, 24:1--159, 2015.

\bibitem{DL93}
R.A. DeVore and G.G. Lorentz.
\newblock {\em Constructive approximation}.
\newblock Springer, Berlin-Heidelberg, 1993.

\bibitem{DGLS17}
J.~Dick, R.N. Gantner, Q.T. Le~Gia, and C.~Schwab.
\newblock Multilevel higher-order quasi-{M}onte {C}arlo {B}ayesian estimation.
\newblock {\em Math. Models Methods Appl. Sci.}, 27(5):953--995, 2017.

\bibitem{DHS17}
J.~D\"{o}lz, H.~Harbrecht, and C.~Schwab.
\newblock Covariance regularity and $\mathcal {H}$-matrix approximation for
  rough random fields.
\newblock {\em Numer. Math.}, 135(4):1045–1071, 2017.

\bibitem{Fasshauer2007}
G.E. Fasshauer.
\newblock {\em Meshfree {A}pproximation {M}ethods with {MATLAB}}.
\newblock World Scientific, River Edge, 2007.

\bibitem{FKS18}
M.~Feischl, F.Y. Kuo, and I.H. Sloan.
\newblock Fast random field generation with $\mathcal{H}$-matrices.
\newblock {\em Numer. Math.}, 140(3):639–676, 2018.

\bibitem{Gie01}
K.~Giebermann.
\newblock Multilevel approximation of boundary integral operators.
\newblock {\em Computing}, 67(3):183--207, 2001.

\bibitem{GBD98}
D.~Gines, G.~Beylkin, and J.~Dunn.
\newblock {LU} factorization of non-standard forms and direct multiresolution
  solvers.
\newblock {\em Appl. Comput. Harm. Anal.}, 5(2):156--201, 1998.

\bibitem{GR87}
L.~Greengard and V.~Rokhlin.
\newblock {A} fast algorithm for particle simulation.
\newblock {\em J. Comput. Phys.}, 73:325--348, 1987.

\bibitem{H15}
W.~Hackbusch.
\newblock {\em Hierarchical Matrices: Algorithms and Analysis}, volume~49.
\newblock Springer, Berlin-Heidelberg, 2015.

\bibitem{HHPS18}
A.-L. Haji-Ali, H.~Harbrecht, M.D. Peters, and M.~Siebenmorgen.
\newblock Novel results for the anisotropic sparse grid quadrature.
\newblock {\em J. Complexity}, 47:62--85, 2018.

\bibitem{Hal60}
J.~H. Halton.
\newblock On the efficiency of certain quasi-random sequences of points in
  evaluating multi-dimensional integrals.
\newblock {\em Numer. Math.}, 2(1):84--90, 1960.

\bibitem{Samplets}
H.~Harbrecht and M.~Multerer.
\newblock Samplets: {C}onstruction and scattered data compression.
\newblock {\em J. Comput. Phys.}, 471:111616, 2022.

\bibitem{HPS16}
H.~Harbrecht, M.~Peters, and M.~Siebenmorgen.
\newblock Analysis of the domain mapping method for elliptic diffusion problems
  on random domains.
\newblock {\em Numer. Math.}, 134(4):823--856, 2016.

\bibitem{LSGK19}
A.~Litvinenko, Y.~Sun, M.~Genton, and D.~Keyes.
\newblock Likelihood approximation with hierarchical matrices for large spatial
  datasets.
\newblock {\em Comput. Stat. Data Anal.}, 137:115--132, 2019.

\bibitem{MXTCB15}
W.B. March, B.~Xiao, S.~Tharakan, C.D. Yu, and G.~Biros.
\newblock A kernel-independent {FMM} in general dimensions.
\newblock In {\em SC '15: Proceedings of the International Conference for High
  Performance Computing, Networking, Storage and Analysis}, New York, NY, USA,
  2015. Association for Computing Machinery.

\bibitem{ASKIT}
W.B. March, B.~Xiao, C.D. Yu, and G.~Biros.
\newblock {ASKIT}: {A}n efficient, parallel library for high-dimensional kernel
  summations.
\newblock {\em SIAM J. Sci. Comput.}, 38(5):S720--S749, 2016.

\bibitem{NTW08}
F.~Nobile, R.~Tempone, and C.G. Webster.
\newblock An anisotropic sparse grid stochastic collocation method for partial
  differential equations with random input data.
\newblock {\em SIAM J. Numer. Anal.}, 46(5):2411--2442, 2008.

\bibitem{SMO63}
S.A. Smolyak.
\newblock Quadrature and interpolation formulas for tensor products of certain
  class of functions.
\newblock {\em Dokl. Akad. Nauk SSSR}, 148(5):1042--1053, 1963.
\newblock Engl. Transl.: Soviet Math. Dokl. 4:240--243, 1963.

\bibitem{Tausch}
J.~Tausch and A.~Weckiewicz.
\newblock Multidimensional fast {G}auss transforms by {C}hebyshev expansions.
\newblock {\em SIAM J. Sci. Comput.}, 31(5):3547--3565, 2009.

\bibitem{Wendland2004}
H.~Wendland.
\newblock {\em Scattered {D}ata {A}pproximation}.
\newblock Cambridge University Press, Cambridge, 2004.

\bibitem{YBZ04}
L.~Ying, G.~Biros, and D.~Zorin.
\newblock A kernel-independent adaptive fast multipole method in two and three
  dimensions.
\newblock {\em J. Comput. Phys.}, 196:591--626, 2004.

\bibitem{YLRB17}
C.~D. Yu, J.~Levitt, S.~Reiz, and G.~Biros.
\newblock Geometry-oblivious {FMM} for compressing dense {SPD} matrices.
\newblock In {\em Proceedings of the International Conference for High
  Performance Computing, Networking, Storage and Analysis}, SC '17, New York,
  NY, USA, 2017. Association for Computing Machinery.

\bibitem{PZ}
P~Zaspel.
\newblock Algorithmic patterns for $\mathcal{H}$-matrices on many-core
  processors.
\newblock {\em J. Sci. Comput}, 17:1174--1206, 2019.

\end{thebibliography}
\end{document}